\newtheorem{theorem}{Theorem}[section]
\newtheorem{lemma}[theorem]{Lemma}
\newtheorem{proposition}[theorem]{Proposition}
\newenvironment{remark}
   {{\parindent=0pt {\bf Remark.}}}
   {\vspace{0.1cm}}
\DeclareMathOperator{\Auto}{Aut} \DeclareMathOperator{\Det}{Det}
\DeclareMathOperator{\Stab}{Stab}
\title{Hypergraphs for computing determining sets of Kneser graphs}
\author{J. Cáceres \thanks{Department of Statistics and Applied Mathematics,
University of Almería, Spain. Email addresses: {\tt
\{jcaceres,mpuertas\}@ual.es}. Partially supported by projects
MTM2008-05866-C03-01, P06-FQM-01649 and PAI FQM-305.} \and D.
Garijo \thanks{Department of Applied Mathematics I, University of
Seville, Spain. Email addresses: {\tt
\{dgarijo,gonzalezh,almar\}@us.es}. Partially supported by
projects MTM2008-05866-C03-01, P06-FQM-01649 and PAI FQM-164.}
\and A. González$^\dag$ \and A. Márquez$^\dag$ \and M. L.
Puertas$^*$}
\begin{document}

\maketitle

\begin{abstract}
A set of vertices $S$ is a \emph{determining set} of a graph $G$
if every automorphism of $G$ is uniquely determined by its action
on $S$. The \emph{determining number} of $G$ is the minimum
cardinality of a determining set of $G$. This paper studies
determining sets of Kneser graphs from a hypergraph perspective.
This new technique lets us compute the determining number of a
wide range of Kneser graphs, concretely $K_{n:k}$ with $n\geq
\frac{k(k+1)}{2}+1$. We also show  its usefulness  by giving
shorter proofs of the characterization of all Kneser graphs with
fixed determining number 2, 3 or 4, going even further to fixed
determining number 5. We finally establish for which Kneser
graphs $K_{n:k}$ the determining number is equal to $n-k$,
answering a question posed by Boutin.

\

\noindent \emph{Keywords:} Determining set, determining number,
Kneser graph, hypergraph.
\end{abstract}

\section{Introduction}

%mirar como lo dicen en fixing sets

The \emph{determining number} of a graph $G$ is the minimum
cardinality of a set $S\subset V(G)$ such that the automorphism
group of the graph obtained from $G$ by fixing every vertex in $S$
is trivial. The set $S$ is called a \emph{determining set} of $G$.
Although they were first defined as \emph{fixing sets} by
Harary~\cite{harary0}, we follow the terminology of \cite{boutin}
(see also \cite{albertson2}) since the author develops a study on
Kneser graphs. Specifically,  tight bounds for their determining
numbers are obtained and all Kneser graphs with determining
number $2$, $3$ or $4$ are provided. The main tools used in
\cite{boutin} to find a determining set or to bound a determining
number of a Kneser graph are based on characteristic matrices and
vertex orbits.

The notion of determining set has its origin in the idea of
distinguishing the vertices in a graph, particularly in the
concept of \emph{symmetry breaking} which was introduced by
Albertson and Collins \cite{albertson} and, independently, by
Harary \cite{harary2,harary0}. Symmetry breaking has several
applications; among them those related to the problem of
programming a robot to manipulate objects \cite{lynch}.
Determining sets have been since then widely studied. There
exists by now an extensive literature on this topic. Besides the
above-mentioned references, see for instance
\cite{albertson2,boutin2,erwin,gibbons}.

On the other hand, Kneser's conjecture states that the graph with
all $k-$element subsets of $\{1, \ldots ,n\}$ as vertices and with
edges connecting disjoint sets has chromatic number $n-2k+2$.
Kneser proposed this conjecture in \cite{mkneser} in connection
with a study of quadratic forms. Its first proof by Lovasz
\cite{lovasz} was the beginning of topological combinatorics as a
field of research. Kneser graphs arose then as an interesting
family of graphs to explore, and several topological proofs of
the Kneser's conjecture have been published; among them those of
Bárány \cite{barany} and Sarkaria \cite{sarkaria}. In 2004, it
appeared the first combinatorial proof of this conjecture, due to
Matou$\check{\rm s}$ek \cite{matousek}. Besides, in \cite{furedi}
 extremal problems concerning these graphs are investigated.

This paper addresses a general study of determining sets of Kneser
graphs. Our main contribution is to introduce hypergraphs as a
tool for finding determining sets which is done in Section 3.
Indeed, we prove that every subset of vertices $S$ of a Kneser
graph  $K_{n:k}$ has an associated hypergraph $\mathcal{H}_S$. The
set is determining whenever the hypergraph is $k$-regular and has
either $n$ or $n-1$ edges. We also show that every $k-$regular
simple hypergraph with either $n$ or $n-1$ edges and $n\geq 2k+1$
has an associated determining set of $K_{n:k}$. This
characterization lets us compute the determining number of all
Kneser graphs $K_{n:k}$ verifying that $n\geq
\frac{k(k+1)}{2}+1$, which is a significant advance since the
only exact values obtained previously are for $n=2^r-1$ and
$k=2^{r-1}-1$ (see \cite{boutin} for details).

In Section 4, we list all Kneser graphs with fixed determining
number 2, 3, 4 or 5. In the first three cases, we provide shorter
proofs of those developed in \cite{boutin} in order to show that
hypergraphs play an important role in the study of the determining
number of Kneser graphs. Indeed, our technique lets us go further
to fixed determining number equal to 5.

Section 5 concerns the question of whether there exists an
infinite family of Kneser graphs $K_{n:k}$ with $k\geq 2$ and
determining number $n-k$, which was posed by Boutin in
\cite{boutin}. The answer to this question is given by Theorem
\ref{n-k}; to reach it we use as a main tool our approach of
determining sets by hypergraphs.

We conclude the paper with some remarks and open problems.

\section{Preliminaries}

Graphs in this paper are finite and undirected. The vertex set and
the edge set of a graph $G$ are denoted by $V(G)$ and $E(G)$,
respectively. A \emph{hypergraph} is a generalization of a graph,
where edges can connect any number of vertices. Formally, a
hypergraph $\cal H$ is a pair $(V({\cal H}), E(\cal H))$ where
$V(\cal H)$ is the set of \emph{vertices}, and $E(\cal H)$ is a
set of non-empty subsets of $V(\cal H)$ called \emph{hyperedges}
or simply \emph{edges}. When edges appear only once, the
hypergraph is called \emph{simple}. The \emph{order} of a
hypergraph is the number of its vertices, written as $|V(\cal
H)|$, and the \emph{size} is the number of its edges $|E(\cal
H))|$. A hyperedge containing $m$ vertices is said to be an edge
of size $m$. Thus, given a hypergraph with $n$ vertices there are
edges of size ranging over the set $\{1,\ldots, n\}$. The
\emph{degree} of a vertex $v$ is the number of hyperedges
containing $v$. A hypergraph is called $k$-\emph{regular} if
every vertex has degree $k$. For more terminology we follow
\cite{berge} and \cite{west}. In all the figures in this paper,
the hyperedges of size greater than 2 are illustrated as shadowed
regions.

\subsection{Determining Sets}

An \emph{automorphism} of $G$, $f:V(G)\to V(G)$, is a bijective
mapping such that $$f(u)f(v)\in E(G)\Longleftrightarrow uv\in
E(G).$$ As usual $\Auto(G)$ denotes the automorphism group of $G$.

A subset $S\subseteq V(G)$ is said to be a \emph{determining set}
of $G$ if whenever $g,h\in$ $\Auto(G)$ so that $g(s)=h(s)$ for all
$s\in S$, then $g(v)=h(v)$ for all $v\in V(G)$. The minimum
cardinality of a determining set of $G$, denoted by $\Det(G)$, is
called the \emph{determining number} of $G$.

Note that every graph has a determining set. It suffices to
consider any set containing all but one vertex. Thus, $\Det(G)\le
|V(G)|-1$. The only connected graphs with $\Det(G)=|V(G)|-1$ are
the complete graphs. A graph $G$ with $\Det(G)=0$ is called
\emph{asymmetric} or \emph{rigid graph} (see for instance
\cite{albertson}). In \cite{beineke}, it is proved that almost all
graphs are rigid.

A characterization of determining set is provided by Boutin
in~\cite{boutin} by using the \emph{pointwise stabilizer} of $S$,
that is, for any $S\subseteq V(G)$ $$\Stab(S)=\{g\in\Auto(G) \, |
\, g(v)=v, \forall v\in S\}=\bigcap_{v\in S}\Stab(v).$$

\begin{proposition}{\rm{\cite{boutin}}}\label{estabilizador}
$S\subseteq V(G)$ is a determining set of $G$ if and only if
$\Stab(S)=\{id\}$.
\end{proposition}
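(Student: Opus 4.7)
The plan is to prove both implications of the biconditional directly from the definitions, using the fact that $\Auto(G)$ is a group and therefore closed under composition and inversion. The key idea is that the statement ``$g$ and $h$ agree on $S$'' can be rephrased as ``$h^{-1}\circ g$ fixes $S$ pointwise,'' which converts the determining condition into a statement about the pointwise stabilizer.

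For the forward direction, I will assume $S$ is a determining set and take an arbitrary $g\in\Stab(S)$. By definition $g(s)=s$ for every $s\in S$, so $g$ and the identity automorphism $\mathrm{id}$ agree on $S$. Applying the determining property to the pair $(g,\mathrm{id})$ yields $g(v)=\mathrm{id}(v)=v$ for every $v\in V(G)$, hence $g=\mathrm{id}$. This shows $\Stab(S)=\{\mathrm{id}\}$.

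For the converse, I will assume $\Stab(S)=\{\mathrm{id}\}$ and take any pair $g,h\in\Auto(G)$ with $g(s)=h(s)$ for all $s\in S$. The composition $h^{-1}\circ g$ is again an automorphism of $G$, and for each $s\in S$ it satisfies $(h^{-1}\circ g)(s)=h^{-1}(h(s))=s$. Therefore $h^{-1}\circ g\in\Stab(S)$, which by hypothesis forces $h^{-1}\circ g=\mathrm{id}$, i.e.\ $g=h$, so $g$ and $h$ agree on all of $V(G)$. By definition, $S$ is a determining set.

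There is essentially no obstacle here: once the definitions are unfolded, both directions reduce to a single manipulation in the group $\Auto(G)$. The only place where care is needed is to make explicit the use of group inverses in the backward direction, since the definition of a determining set is stated in terms of two automorphisms agreeing on $S$ rather than in terms of a single element fixing $S$.
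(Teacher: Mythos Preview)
Your proof is correct. Note, however, that the paper does not supply its own proof of this proposition: it is quoted from \cite{boutin} and stated without argument, so there is nothing in the paper to compare against. The reasoning you give is the standard one and is essentially the same as that in Boutin's original paper.
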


There are many graphs whose determining number can be easily
computed. Among them the path $P_n$, the cycle $C_n$ and the
complete graph $K_n$. An extreme is a minimum determining set of
$P_n$ and so ${\rm Det}(P_n)=1$. Any pair of non-antipodal
vertices is a determining set of a cycle, thus $\Det(C_n)=2$. A
minimum determining set of $K_n$ is any set containing all but
one vertex, and hence $\Det(K_n)=n-1$. Example 1 of \cite{boutin}
shows that the determining number of the Petersen graph is equal
to three. Nevertheless, computing the determining number can
require intricate arguments (see for instance \cite{boutin,
boutin2, CGPS}). Figure \ref{productos} shows minimum determining
sets of some Cartesian products of graphs.

\begin{figure}[ht]
\begin{center}

\begin{tabular}{cccc}
\includegraphics[width=0.2\textwidth]{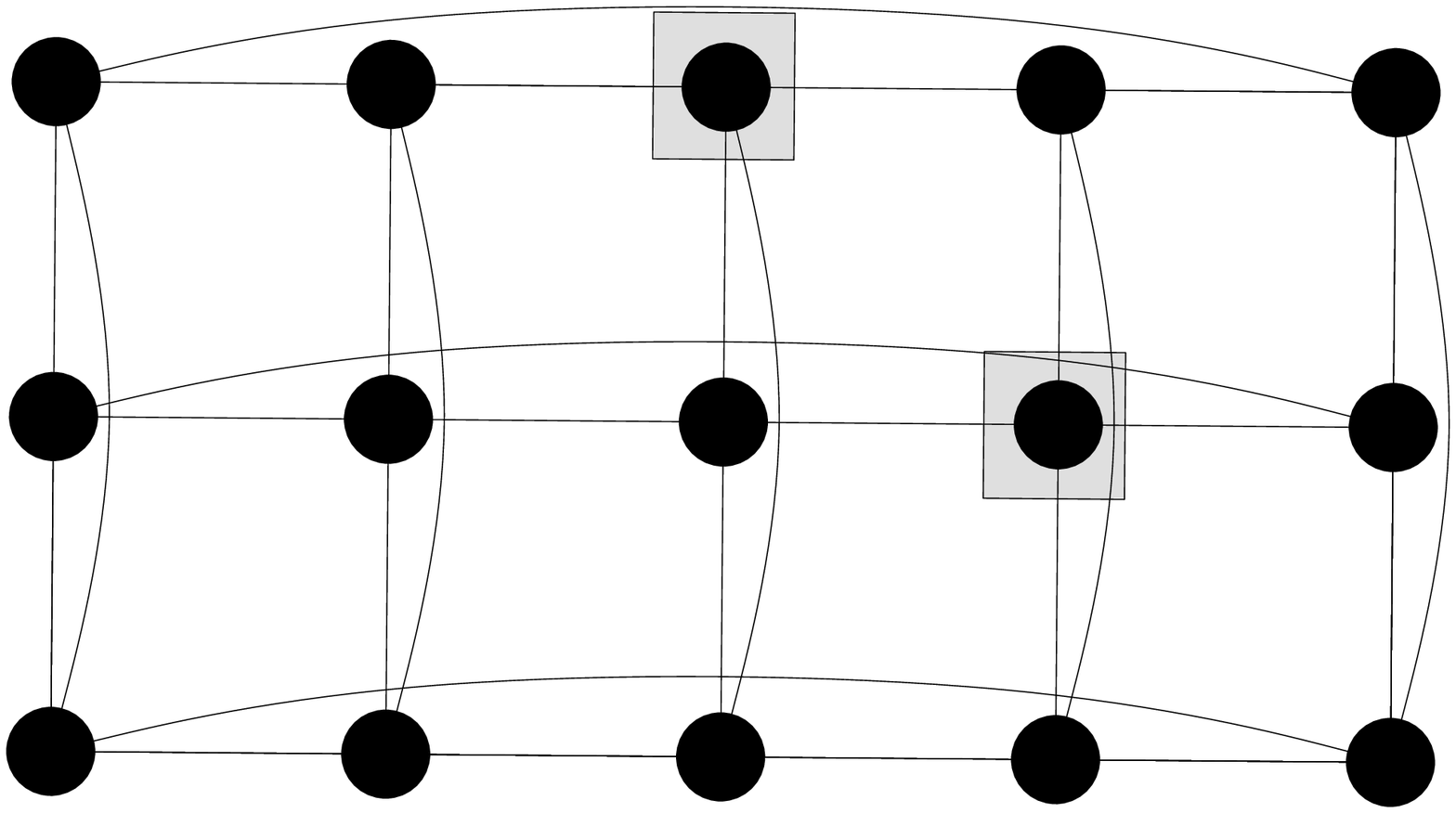}&
 & &
\includegraphics[width=0.3\textwidth]{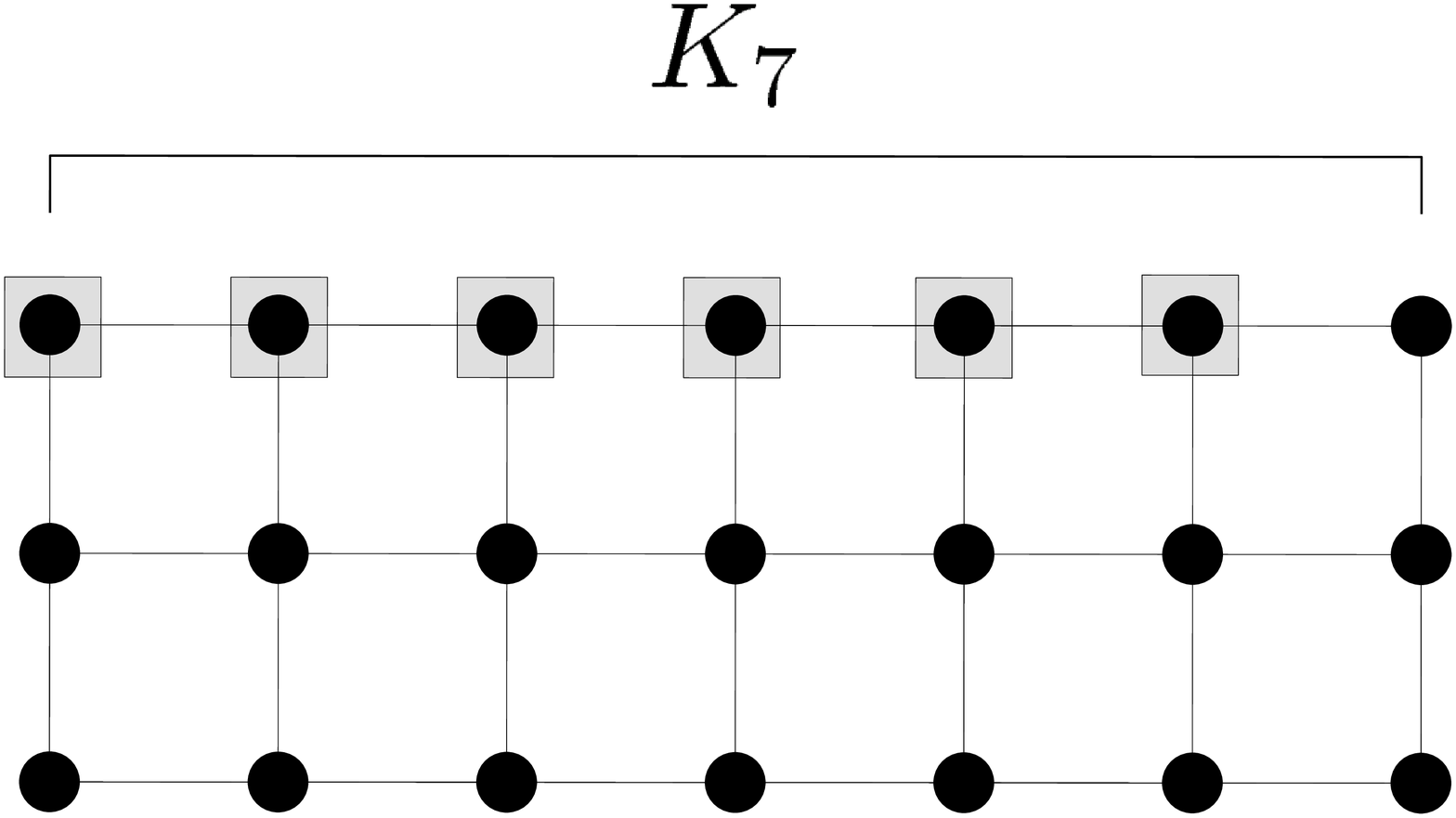}\\ & & \\
(a) & &  &  (b) \\

\end{tabular}
\end{center}
\caption{(a) The squared vertices form a minimum determining set
of $C_5\Box C_3$, (b) The squared vertices form a minimum
determining set of $K_7\Box P_3$. For the sake of clarity, we omit
some edges of the complete graph $K_7$. }\label{productos}
\end{figure}

\subsection{Kneser Graphs}

The Kneser graph $K_{n:k}$ has vertices associated with the
$k-$subsets of the $n-$set $[n]=\{1,\ldots, n\}$ and edges
connecting disjoint sets (see Figure \ref{kneser73}). This family
of graphs is usually considered for $n\geq 2k$ since for $n<2k$
we obtain $\binom{n}{k}$ isolated vertices. Moreover, the case
$n=2k$ gives a set of disconnected edges which is not an
interesting case for computing the determining number that is
equal to $\frac{1}{2}\binom{2k}{k}$, that is, half the number of
vertices. Therefore, throughout this paper we shall assume that
$n>2k$ and vertices will simultaneously be considered as $k-$sets
and as vertices. Thus, the \emph{complementary} of a vertex $W\in
V(K_{n:k})$, written as $W^c$, is the $(n-k)-$subset $[n]\setminus
W$.

\begin{figure}[ht]
\begin{center}

\begin{tabular}{c}
\includegraphics[width=0.4\textwidth]{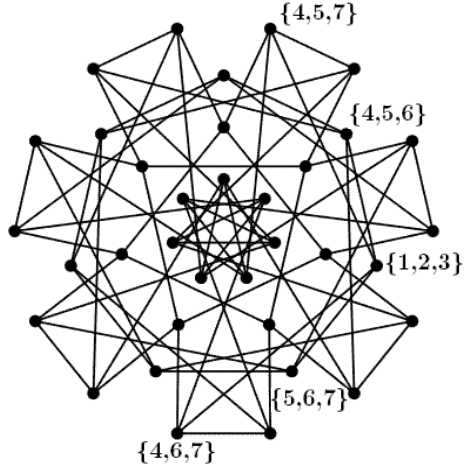}

\end{tabular}
\end{center}
\caption{Kneser graph $K_{7:3}$. The neighbours of $\{1,2,3\}$
are $\{4,5,7\}$, $\{4,5,6\}$, $\{5,6,7\}$ and $\{4,6,7\}$, since
they have no elements in common. }\label{kneser73}
\end{figure}

Boutin in \cite{boutin} provides a first characterization of
determining sets of Kneser graphs which is a key tool also in this
paper.

\begin{lemma}{\rm \cite{boutin}} \label{caractboutin}
The set $\{V_1,\ldots V_r\}$ is a determining set of $K_{n:k}$ if
and only if there exists no pair of distinct elements $a,b\in
[n]$ so that for each $i$ either $\{a,b\}\subseteq V_i$ or
$\{a,b\}\subseteq V_i^c$.
\end{lemma}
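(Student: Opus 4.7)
The plan is to combine Proposition \ref{estabilizador} with the well-known description of $\Auto(K_{n:k})$. Recall that, since we are in the range $n>2k$, every automorphism of $K_{n:k}$ is induced by a permutation $\sigma\in S_n$ acting on vertices (which are $k$-subsets) via $\sigma(\{a_1,\ldots,a_k\})=\{\sigma(a_1),\ldots,\sigma(a_k)\}$; moreover distinct permutations of $[n]$ yield distinct automorphisms because $n>k$ implies one can always find a $k$-subset separating any two chosen elements. I would state this once at the start and use it freely.

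For the ``only if'' direction I would argue by contrapositive. Suppose distinct $a,b\in[n]$ exist such that for every $i$ either $\{a,b\}\subseteq V_i$ or $\{a,b\}\subseteq V_i^c$. Consider the transposition $\tau=(a\,b)\in S_n$ and the automorphism it induces on $K_{n:k}$. For each $V_i$, if $\{a,b\}\subseteq V_i$ then $\tau$ permutes the two elements inside $V_i$ and fixes the remaining $k-2$ elements, so $\tau(V_i)=V_i$; if instead $\{a,b\}\subseteq V_i^c$, then $\tau$ fixes every element of $V_i$ pointwise, again giving $\tau(V_i)=V_i$. Since $\tau$ is nontrivial as a permutation of $[n]$ (and therefore nontrivial as an automorphism, by the remark above), this shows $\tau\in\Stab(\{V_1,\ldots,V_r\})\setminus\{id\}$, so by Proposition \ref{estabilizador} the set is not determining.

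For the ``if'' direction, suppose $\{V_1,\ldots,V_r\}$ is not determining. By Proposition \ref{estabilizador} there exists a non-identity $g\in\Stab(\{V_1,\ldots,V_r\})$, which is induced by some non-identity $\sigma\in S_n$. Pick $a\in[n]$ with $b:=\sigma(a)\neq a$. Fix any index $i$. Because $\sigma$ is a bijection of $[n]$ and satisfies $\sigma(V_i)=V_i$, it also satisfies $\sigma(V_i^c)=V_i^c$. Hence $a\in V_i$ forces $b=\sigma(a)\in V_i$, while $a\in V_i^c$ forces $b\in V_i^c$. In either case $\{a,b\}\subseteq V_i$ or $\{a,b\}\subseteq V_i^c$, which is the forbidden configuration.

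The main (but small) obstacle is making the identification between automorphisms of $K_{n:k}$ and permutations in $S_n$ explicit and unambiguous, since the statement mixes actions on $k$-subsets (the $V_i$) with actions on elements (the pair $\{a,b\}$); once this identification is in place both directions reduce to short set-theoretic checks using transpositions and invariance of $V_i$ under $\sigma$.
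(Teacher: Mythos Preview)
Your proof is correct. Note, however, that the paper does not actually supply a proof of this lemma: it is quoted from \cite{boutin} and used as a black box, so there is no ``paper's own proof'' to compare against. Your argument is the natural one and is essentially what one finds in Boutin's original paper: identify $\Auto(K_{n:k})$ with $S_n$ (valid under the standing hypothesis $n>2k$), then use Proposition~\ref{estabilizador} together with the observation that a transposition $(a\,b)$ fixes a $k$-set $V$ setwise precisely when $\{a,b\}\subseteq V$ or $\{a,b\}\subseteq V^c$. Both directions are handled cleanly; the only point worth tightening is the throwaway justification of faithfulness of the $S_n$-action on $k$-subsets, but the claim is standard and easy.
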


Observe that the above lemma implies that every determining set
of $K_{n:k}$ has to contain all the elements of $[n]$ but at most
one.

Lemma \ref{caractboutin} is used in \cite{boutin} to obtain tight
upper and lower bounds for Det$(K_{n:k})$. Concretely, the author
shows that $\log_2(n+1)\leq \Det(K_{n:k})\leq n-k$ and provides
the exact value $\Det(K_{2^r-1:2^{r-1}})=r$.

\section{Computing the determining number of Kneser graphs}

In this section, we characterize determining sets of Kneser graphs
in terms of hypergraphs. This approach is our key tool to compute
the determining number of a wide range of Kneser graphs.

For any set of vertices $S \subseteq V(K_{n:k})$ denote by
$\mathcal{H}_S$ the $k$-regular hypergraph obtained as follows.
The vertex set $V(\mathcal{H}_S)$ is equal to $S$, and two
vertices belong to the same hyperedge whenever they contain  a
common element. When an element of $[n]$ appears only once in the
vertices of $S$, we have a loop in the corresponding vertex of
$\mathcal{H}_S$. Figure~\ref{hypergraph} shows an instance of
hypergraph associated to a set $S\subseteq V(K_{6:3})$.

\begin{figure}[ht]
\begin{center}
\includegraphics[width=0.3\textwidth]{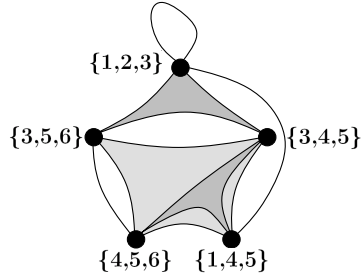} %%% solo para convertir directamente de .tex a .pdf
%\end{center}
\label{hypergraph} \caption{Hypergraph associated to
$S=\{\{1,2,3\},\{3,4,5\},\{1,4,5\},\{4,5,6\},\{3,5,6\}\}$. Number
2 appears only once in $S$ so we have a loop attached to the
vertex $\{1,2,3\}$. There are two edges of size 2 determined by
numbers 1 and 6, two edges of size 3 since numbers 3 and 4 appear
in three vertices of $S$, and one edge of size 4 determined by
number 5.}
\end{center}
\end{figure}

The two following results state that the condition of being
determining set can be captured from the structure of the
associated hypergraph.

\begin{lemma} \label{conditions1}
A vertex set $S$ is a determining set of $K_{n:k}$ with $n\geq
2k+1$ if and only if the associated $k$-regular hypergraph
$\mathcal{H}_S$ is simple and has either $n$ or $n-1$ edges.
\end{lemma}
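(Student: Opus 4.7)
The plan is to reformulate both sides of the equivalence in a common combinatorial language and then invoke Lemma~\ref{caractboutin}. For each $a\in[n]$, introduce the set $E_a:=\{V\in S : a\in V\}\subseteq S$. By construction, the hyperedges of $\mathcal{H}_S$ are exactly the non-empty $E_a$ (with $a$ ranging over $[n]$, and counted with multiplicity if two different elements induce the same set). Since every $V\in S$ is a $k$-subset of $[n]$, $V$ belongs to exactly $k$ of the sets $E_a$, so the $k$-regularity of $\mathcal{H}_S$ is automatic.

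Next, I would translate Boutin's condition into a statement about the family $\{E_a\}_{a\in[n]}$. For distinct $a,b\in[n]$ and a fixed $V_i\in S$, the disjunction ``$\{a,b\}\subseteq V_i$ or $\{a,b\}\subseteq V_i^c$'' is exactly ``$V_i\in E_a\Leftrightarrow V_i\in E_b$''. Quantifying over all $i$, it becomes ``$E_a=E_b$''. Hence Lemma~\ref{caractboutin} says: $S$ is a determining set of $K_{n:k}$ if and only if $E_a\neq E_b$ for every pair of distinct $a,b\in[n]$.

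It remains to show that this distinctness condition is equivalent to ``$\mathcal{H}_S$ is simple with $n$ or $n{-}1$ edges''. Simplicity rules out $E_a=E_b\neq\emptyset$ for $a\neq b$, and the number of edges is then the number of indices $a$ with $E_a\neq\emptyset$; so $n$ edges means all $E_a$ are non-empty and pairwise distinct, while $n{-}1$ edges means exactly one $E_a$ is empty and the rest are non-empty and pairwise distinct. In both cases the family $\{E_a\}_{a\in[n]}$ is pairwise distinct, matching Boutin's condition. Conversely, if $\mathcal{H}_S$ is not simple then two distinct non-empty $E_a, E_b$ coincide; and if $\mathcal{H}_S$ is simple but has at most $n{-}2$ edges, then at least two indices $a,b$ satisfy $E_a=E_b=\emptyset$. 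Either situation produces a pair violating distinctness, completing the equivalence.

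The main obstacle, as I see it, is purely bookkeeping: correctly accounting for elements of $[n]$ that appear in \emph{no} vertex of $S$ (the empty $E_a$'s) is what allows the edge count to drop to $n-1$, and one has to distinguish carefully between ``two hyperedges coincide'' (forbidden by simplicity) and ``two indices both give the empty set'' (allowed in a simple hypergraph but still breaking Boutin's condition). The hypothesis $n\geq 2k+1$ enters only through Lemma~\ref{caractboutin} and the standing assumption that $K_{n:k}$ is a non-trivial Kneser graph.
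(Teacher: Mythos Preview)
Your argument is correct and follows essentially the same route as the paper: both proofs rest on Lemma~\ref{caractboutin} and on the observation that the hyperedges of $\mathcal{H}_S$ are precisely the (non-empty) sets $E_a=\{V\in S:a\in V\}$, so that Boutin's condition becomes the injectivity of $a\mapsto E_a$. Your write-up is in fact more explicit in separating the two ways distinctness can fail (a repeated non-empty hyperedge versus two empty $E_a$'s), which the paper handles more tersely.
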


\begin{proof}
($\Longrightarrow$) Consider a determining set $S$ of $K_{n:k}$
and the associated $k$-regular hypergraph $\mathcal{H}_S$. By
Lemma \ref{caractboutin}, there exists no pair of distinct
elements $a,b\in [n]$ so that for each vertex $V\in S$ either
$\{a,b\}\subseteq V$ or $\{a,b\}\subseteq V^c$. Hence,
$\mathcal{H}_S$ is simple. Indeed, having multiple edges in
$\mathcal{H}_S$ is equivalent to have at least two elements of
$[n]$ in exactly the same vertices of $S$, what implies that they
are not distinguishable by any vertex of $S$.

It remains to prove that $\mathcal{H}_S$ has either $n$ or $n-1$
edges. By Lemma \ref{caractboutin}, the vertices of every
determining set $S$  have to contain all the elements of $[n]$
but at most one.  The result follows since elements of $[n]$ in
$S$ correspond to edges in $\mathcal{H}_S$.

($\Longleftarrow$) Suppose that $\mathcal{H}_S$ is simple and has
either $n$ or $n-1$ edges. Then, for every $a,b\in [n]$ the
corresponding edges in $\mathcal{H}_S$ are different and at most
one element of $[n]$ is contained in no vertex of $S$. By Lemma
\ref{caractboutin} yields the result.
\end{proof}

\begin{lemma}\label{conditions}
For any $k-$regular simple hypergraph $\mathcal{H}$ with either
$n$ or $n-1$ edges and $n\geq 2k+1$, there exists a determining
set $S$ of $K_{n:k}$ such that $\mathcal{H}\cong \mathcal{H}_S$.
\end{lemma}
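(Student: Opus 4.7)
The plan is to invert the construction from Lemma~\ref{conditions1}: given $\mathcal{H}$, I build the set $S$ by labeling the hyperedges of $\mathcal{H}$ by elements of $[n]$ and then translating each vertex $v \in V(\mathcal{H})$ into the $k$-subset of $[n]$ recording the labels of its incident hyperedges.

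Concretely, I would first fix an injection $\ell : E(\mathcal{H}) \hookrightarrow [n]$, which exists because $|E(\mathcal{H})| \in \{n-1, n\}$; when $|E(\mathcal{H})| = n-1$, exactly one element of $[n]$ is left unused. For each $v \in V(\mathcal{H})$, I define $A(v) := \{\ell(e) : v \in e\}$, which is a $k$-subset of $[n]$ by $k$-regularity, hence a vertex of $K_{n:k}$. Then I set $S := \{A(v) : v \in V(\mathcal{H})\}$.

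Next I would verify that the assignment $\varphi : v \mapsto A(v)$ is a hypergraph isomorphism $\mathcal{H} \to \mathcal{H}_S$. By construction, $\ell(e) \in A(v) \cap A(w)$ if and only if $v$ and $w$ are both incident to $e$ in $\mathcal{H}$, so the hyperedge of $\mathcal{H}_S$ associated with $\ell(e)$ coincides with $\varphi(e)$; elements of $[n]$ outside $\mathrm{Image}(\ell)$ lie in no $A(v)$ and produce no hyperedge of $\mathcal{H}_S$. This matches the edges of $\mathcal{H}_S$ with those of $\mathcal{H}$, shows $|E(\mathcal{H}_S)| = |E(\mathcal{H})| \in \{n-1, n\}$, and transfers simpleness from $\mathcal{H}$ to $\mathcal{H}_S$. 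Invoking Lemma~\ref{conditions1} applied to $\mathcal{H}_S$ then yields that $S$ is a determining set of $K_{n:k}$.

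The main obstacle is confirming that $\varphi$ is actually a bijection on vertices, i.e., that $A$ is injective, so that $|S| = |V(\mathcal{H})|$. This amounts to a \emph{vertex-simpleness} property of $\mathcal{H}$: no two distinct vertices of $\mathcal{H}$ may share identical incidence sets, since otherwise they would collapse to the same $k$-subset of $[n]$ in $S$ and break the claimed isomorphism. This is the dual of the simpleness condition that the paper records on edges, and must be drawn out of the hypotheses, possibly by combining the counting condition $n \geq 2k+1$ with the fact that $|E(\mathcal{H})| \geq n-1$. Once injectivity is secured, every remaining step is a mechanical translation of definitions together with a direct appeal to Lemma~\ref{conditions1}.
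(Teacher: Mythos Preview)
Your construction is exactly the paper's: label the edges of $\mathcal{H}$ by elements of $[n]$, read off each vertex as the $k$-subset of labels of its incident edges, and invoke Lemma~\ref{conditions1}. The paper simply asserts that the resulting $k$-subsets are ``different'' and moves on; you are more careful in isolating the injectivity of $v\mapsto A(v)$ as the only non-formal step.

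Your instinct that this needs justification is right, but your hope that it can be ``drawn out of the hypotheses'' is not. Take $k=2$, $n=5$, and let $\mathcal{H}$ have vertex set $\{u,v,w_1,w_2\}$ and edge set $\bigl\{\{u,v\},\{u,v,w_1\},\{w_1,w_2\},\{w_2\}\bigr\}$. This is a simple $2$-regular hypergraph with $n-1=4$ edges and $n\ge 2k+1$, yet $u$ and $v$ have identical incidence sets, so $A(u)=A(v)$. Moreover, since the vertices of any $\mathcal{H}_S$ are \emph{distinct} $k$-subsets of $[n]$ and hence automatically have pairwise distinct incidence, no $S$ whatsoever satisfies $\mathcal{H}_S\cong\mathcal{H}$ for this $\mathcal{H}$. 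The lemma as stated therefore requires the extra ``vertex-simple'' hypothesis that distinct vertices have distinct edge-neighbourhoods; the paper's own proof shares this gap. In practice nothing downstream is harmed: the hypergraphs actually built in Theorems~\ref{discretes} and~\ref{gaps} and in Lemma~\ref{nk} are vertex-simple by construction, and in any case every application only uses the consequences that $S$ is determining and $|S|\le |V(\mathcal{H})|$, both of which survive even when $A$ fails to be injective.
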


\begin{proof}
We first label every edge of $\mathcal{H}$ with the elements of
either $[n]$ or $[n-1]$ depending on the number of edges. The
vertices of $\mathcal{H}$ are labeled with the labels of their
incident edges, giving rise to $|V(\mathcal{H})|$ different
$k-$subsets of $[n]$. Take $S$ as the set formed by these
$k-$subsets. Clearly, $\mathcal{H}\cong \mathcal{H}_S$ and by
Lemma \ref{conditions1} the result follows.
\end{proof}

When a determining set $S$ is minimum, Lemma \ref{conditions}
guarantees that it does not exist a $k-$regular, simple
hypergraph $ \mathcal{H}$ with either $n$ or $n-1$ edges and
$|V(\mathcal{H})|<|S|$. Therefore, $\mathcal{H}_S$ is a
hypergraph of minimum order. More generally, we say that a
$k-$regular, simple hypergraph $ \mathcal{H}$ with either $n$ or
$n-1$ edges has \emph{minimum order} if it does not exist a
$k-$regular, simple hypergraph $ \mathcal{H}'$ with either $n$ or
$n-1$ edges  and $ |V(\mathcal{H'})|<|V(\mathcal{H})|$. Thus, the
following lemma is straightforward.

\begin{lemma}\label{detmin}
A vertex set $S$ is a minimum determining set of $K_{n:k}$ with
$n\geq 2k+1$ if and only if the hypergraph $\mathcal{H}_S$ is
simple, has either $n$ or $n-1$ edges and  minimum order.
\end{lemma}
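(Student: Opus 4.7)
The plan is to obtain the statement as an immediate consequence of Lemmas \ref{conditions1} and \ref{conditions}, combined with the definition of \emph{minimum order} just introduced. Lemma \ref{conditions1} already gives the equivalence between "$S$ is a determining set" and "$\mathcal{H}_S$ is simple with $n$ or $n-1$ edges"; the only thing left to match is the word \emph{minimum} on both sides, i.e.\ to identify minimality of $|S|$ with minimum order of $\mathcal{H}_S$. Note that $|V(\mathcal{H}_S)|=|S|$ by construction, which is the bridge connecting the two notions.

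For the forward direction I would start from a minimum determining set $S$ of $K_{n:k}$. Lemma \ref{conditions1} immediately delivers that $\mathcal{H}_S$ is simple and has $n$ or $n-1$ edges. To see that $\mathcal{H}_S$ has minimum order, I argue by contradiction: if there existed a $k$-regular simple hypergraph $\mathcal{H}'$ with $n$ or $n-1$ edges and $|V(\mathcal{H}')|<|V(\mathcal{H}_S)|$, then Lemma \ref{conditions} would produce a determining set $S'$ of $K_{n:k}$ with $\mathcal{H}'\cong\mathcal{H}_{S'}$, whence $|S'|=|V(\mathcal{H}')|<|V(\mathcal{H}_S)|=|S|$, contradicting the minimality of $S$.

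For the reverse direction I would assume that $\mathcal{H}_S$ is simple, has $n$ or $n-1$ edges, and is of minimum order. Lemma \ref{conditions1} gives at once that $S$ is a determining set. If $S$ were not minimum, one could take a determining set $S'$ with $|S'|<|S|$; applying Lemma \ref{conditions1} to $S'$ yields a simple $k$-regular hypergraph $\mathcal{H}_{S'}$ with $n$ or $n-1$ edges and $|V(\mathcal{H}_{S'})|=|S'|<|S|=|V(\mathcal{H}_S)|$, contradicting the assumption that $\mathcal{H}_S$ has minimum order.

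There is no real obstacle here, which is precisely why the authors flag the lemma as straightforward; the content is essentially bookkeeping, ensuring that the biconditional of Lemma \ref{conditions1} transfers cleanly under the extremal condition. The only thing to be careful about is that the existence part of the argument (constructing a smaller determining set from a smaller hypergraph) requires the hypothesis $n\geq 2k+1$, which is exactly the range covered by Lemma \ref{conditions}.
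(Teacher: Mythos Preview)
Your proposal is correct and matches the paper's approach exactly: the authors declare the lemma ``straightforward'' and give no formal proof, merely noting in the preceding paragraph that Lemma~\ref{conditions} rules out a smaller hypergraph when $S$ is minimum---precisely the contradiction argument you spell out for the forward direction. Your careful handling of both directions and the remark about where $n\geq 2k+1$ is needed are appropriate and fill in the details the paper leaves implicit.
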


\begin{remark}
The characterization provided by Lemma \ref{detmin} ensures us
that it does not exist an infinite family of Kneser graphs
$K_{n:k}$ with fixed determining number, say $d$. Indeed, the
hypergraph associated to any minimum determining set must have a
fixed number of vertices $d$ which implies that neither $k$ nor
$n$ can take infinite values.
\end{remark}

Lemmas \ref{conditions1} and \ref{conditions} are the main tools
in order to compute the determining number of all Kneser graphs
$K_{n:k}$ with $n\geq \frac{k(k+1)}{2}+1$, what is done in
Theorems \ref{discretes} and \ref{gaps} below.

\begin{theorem} \label{discretes}
Let $k$ and $d$ be two positives integers such that $k\leq d$ and
$d>2$. Then, $${\rm Det}\left(K_{\left\lfloor
\frac{d(k+1)}{2}\right\rfloor+1:k}\right)=d$$
\end{theorem}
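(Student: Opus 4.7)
The plan is to apply Lemma \ref{detmin}, which reduces the theorem to determining the minimum number of vertices of a $k$-regular simple hypergraph having exactly $n$ or $n-1$ edges, where $n=\lfloor d(k+1)/2\rfloor+1$. Thus the problem splits into an existence half (constructing such a hypergraph on $d$ vertices) and a nonexistence half (ruling out such a hypergraph on fewer than $d$ vertices).

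For the upper bound $\Det(K_{n:k})\le d$, I would exhibit a $k$-regular simple hypergraph $\mathcal{H}$ on $d$ vertices with precisely $n-1=\lfloor d(k+1)/2\rfloor$ edges and then invoke Lemma \ref{conditions} to transport it into a determining set of $K_{n:k}$ of size $d$. The natural candidate places a loop at each vertex and completes with a $(k-1)$-regular simple graph on $d$ vertices, producing $d + d(k-1)/2 = d(k+1)/2$ edges whenever $d(k-1)$ is even, i.e.\ whenever $d$ is even or $k$ is odd. In the remaining parity case ($d$ odd and $k$ even) I would omit the loop at one chosen vertex and replace the regular graph by a simple graph realising the near-regular degree sequence $(k,k-1,\ldots,k-1)$, giving exactly $(d(k+1)-1)/2 = \lfloor d(k+1)/2\rfloor$ edges.

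For the lower bound $\Det(K_{n:k})\ge d$, the key observation is a simple double count: in any $k$-regular simple hypergraph on $d'$ vertices, $\sum_{e}|e|=d'k$; since at most $d'$ edges can be singletons and every other edge contributes at least $2$ to this sum, the total number of edges is bounded by $d'(k+1)/2$. Specialising to $d'=d-1$, a short parity check shows $\lfloor (d-1)(k+1)/2\rfloor\le \lfloor d(k+1)/2\rfloor-1 = n-2$, so no $k$-regular simple hypergraph with fewer than $d$ vertices can have $n-1$ or $n$ edges. Lemma \ref{detmin} then forces $\Det(K_{n:k})\ge d$.

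The main technical obstacle is realising the degree sequence $(k,k-1,\ldots,k-1)$ by a simple graph on $d$ vertices in the parity-mismatched case. I would handle it either by verifying the Erd\H{o}s--Gallai conditions (which simplify neatly because the sequence is almost constant and because the parities force $k\le d-1$), or by the following explicit recipe: build a $k$-regular circulant graph on the $d$ vertices (available since $dk$ is even) and delete a perfect matching among the $d-1$ vertices other than the distinguished one, which is straightforward to exhibit concretely.
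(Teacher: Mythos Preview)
Your proposal is correct and follows essentially the same architecture as the paper: loops plus an almost-$(k-1)$-regular simple graph on $d$ vertices for the upper bound, and the double-count inequality $m\le d'(k+1)/2$ for the lower bound. The only cosmetic differences are that the paper realises the graph part explicitly via perfect-matching or Hamiltonian-cycle decompositions of $K_d$ (and in the $d$ odd, $k$ even case by deleting edges from $\mathcal{H}_{k+1,d}$), whereas you invoke Erd\H{o}s--Gallai or a circulant-plus-matching-deletion; and the paper cites the inequality $|V(\mathcal{H})|\ge\lceil 2|E(\mathcal{H})|/(k+1)\rceil$ from~\cite{GGL} rather than deriving it. Your lower-bound derivation is in fact a touch more self-contained than the paper's.
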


\begin{proof}
We first show that ${\rm Det}(K_{\lfloor
\frac{d(k+1)}{2}\rfloor+1:k})\leq d$. By Lemma \ref{conditions},
it suffices to prove that there exists a $k$-regular simple
hypergraph, say $\mathcal{H}_{k,d}$, with order $d$ and either
$\lfloor \frac{d(k+1)}{2}\rfloor$ or $\lfloor
\frac{d(k+1)}{2}\rfloor+1$ edges. We shall use the fact that
every complete graph $K_d$ has $d-1$ pairwise disjoint perfect
matchings whenever $d$ is even, and $\frac{d-1}{2}$ pairwise
disjoint hamiltonian cycles whenever $d$ is odd (see for
example~\cite{chromaticgraphtheory}). We distinguish three cases
according to the parity of $k$ and $d$.

\,

\emph{Case 1.} $d$ even: Consider $d$ vertices, a loop attached
at each vertex, and the edges of $k-1$ pairwise disjoint perfect
matchings of the complete graph $K_d$ (see Figure
\ref{matchingham}(a)). Clearly, this hypergraph
$\mathcal{H}_{k,d}$ is $k$-regular and has $\frac{d(k+1)}{2}$
edges. Note that its construction does not depend on the parity
of $k$.

\emph{Case 2.} $d$ odd and $k$ odd: $\mathcal{H}_{k,d}$ is the
hypergraph formed by $d$ vertices with loops attached at each
vertex, and $\frac{k-1}{2}$ pairwise disjoint hamiltonian cycles
of $K_d$ (see Figure \ref{matchingham}(b)). It is easy to check
that $\mathcal{H}_{k,d}$ is a $k$-regular hypergraph with
$\frac{d(k+1)}{2}$ edges.

\begin{figure}[ht]
\begin{center}

\begin{tabular}{cccc}
\includegraphics[width=0.2\textwidth]{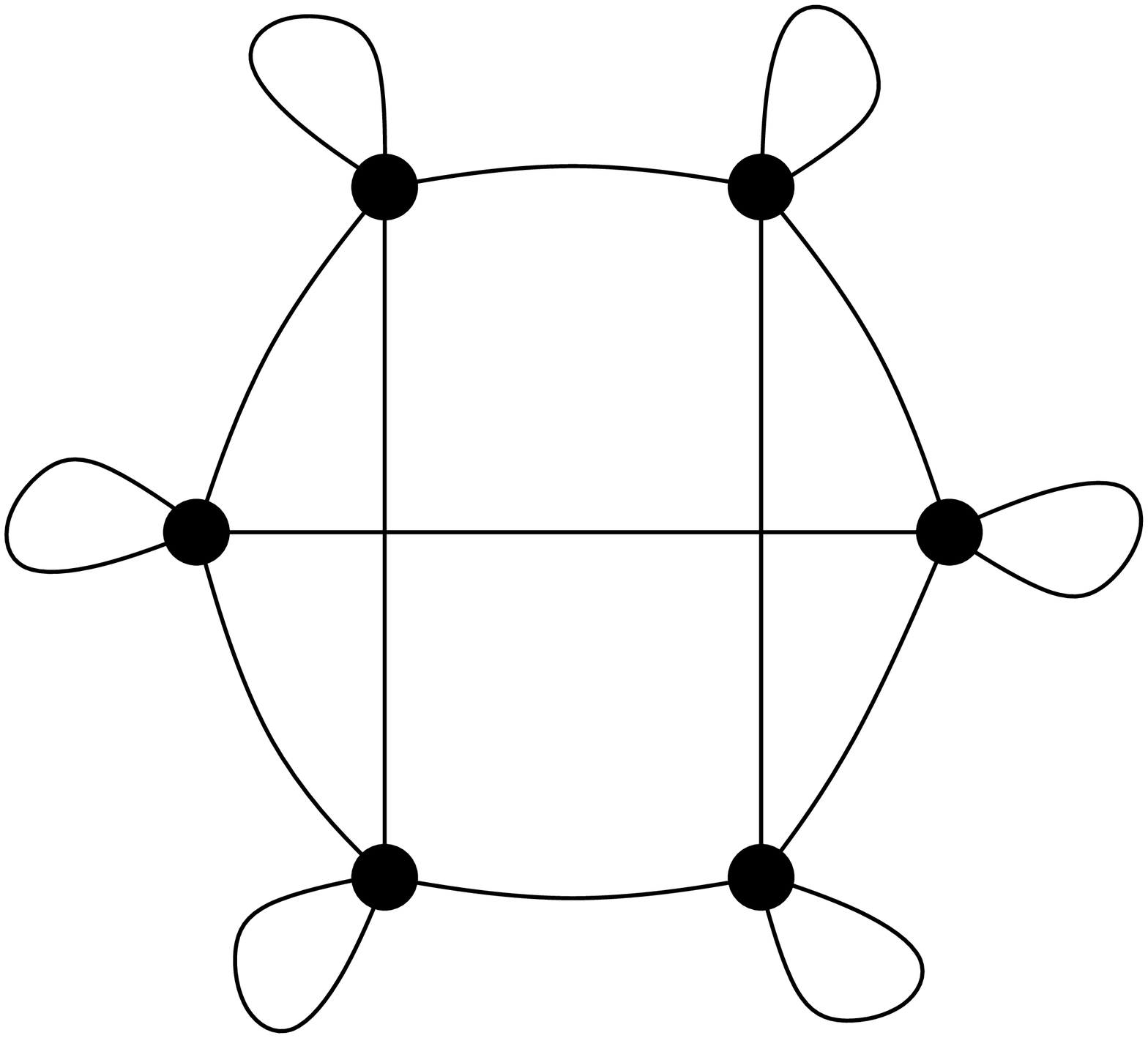}&
 & &
\includegraphics[width=0.2\textwidth]{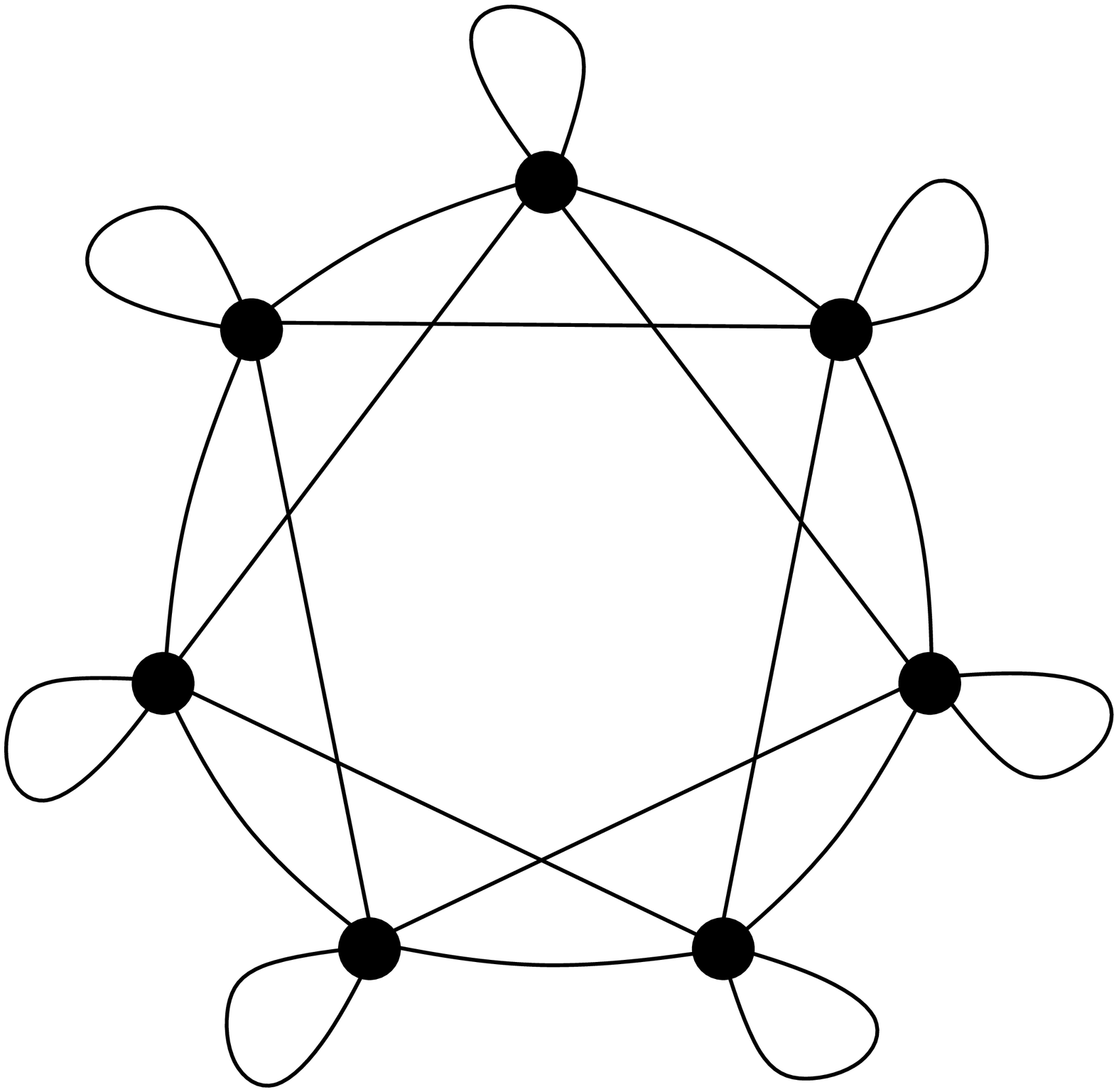}\\ & & \\
(a) & &  &  (b) \\

\end{tabular}
\end{center}
\caption{(a) Hypergraph $H_{4,6}$ constructed by considering $6$
vertices, a loop attached at each vertex, and the edges of $3$
pairwise disjoint perfect matchings of the complete graph $K_6$,
(b) Hypergraph $H_{5,7}$. It has $7$ vertices with loops attached
at each vertex, and $2$ pairwise disjoint hamiltonian cycles of
$K_7$. }\label{matchingham}
\end{figure}

\emph{Case 3.} $d$ odd and $k$ even: Consider the hypergraph
$\mathcal{H}_{k+1,d}$ obtained from case 2. Take any hamiltonian
cycle $C$ with edges, say $\{e_1, e_2, \ldots ,e_d\}$. Now,
delete the edges with even index and the loop attached at the
vertex in which $e_1$ and $e_d$ are incident. This construction
gives rise to a $k-$regular hypergraph $\mathcal{H}_{k,d}$ with
$\frac{d(k+1)-1}{2}=\lfloor \frac{d(k+1)}{2}\rfloor$ edges.

\

To complete the proof, it remains to show that ${\rm
Det}(K_{\lfloor \frac{d(k+1)}{2}\rfloor+1:k})$ is exactly equal
to $d$. By Lemma \ref{conditions}, it suffices to prove that
every $k$-regular hypergraph with either $\lfloor
\frac{d(k+1)}{2}\rfloor$ or $\lfloor \frac{d(k+1)}{2}\rfloor+1$
edges has at least $d$ vertices. Suppose on the contrary that
there exists a $k-$regular hypergraph $\mathcal{H}$ with $\lfloor
\frac{d(k+1)}{2}\rfloor$ edges (analogous for $\lfloor
\frac{d(k+1)}{2}\rfloor+1$ edges) and $d'<d$ vertices. By Theorem
2.8 of \cite{GGL} it follows that $d'=|V(\mathcal{H})|\geq \lceil
\frac{2|E(\mathcal{H})|}{k+1}\rceil$. Hence, $$d'\geq
\left\lceil\frac{2}{k+1}\left\lfloor\frac{d(k+1)}{2}\right\rfloor\right\rceil=d$$
which is a contradiction.

\end{proof}

Our next aim is to extend the result of Theorem \ref{discretes} to
Kneser graphs $K_{n:k}$ verifying that $d\geq k$, $d>2$ and
$\lfloor\frac{(d-1)(k+1)}{2}\rfloor < n-1
<\lfloor\frac{d(k+1)}{2}\rfloor$.  For our purpose, we first need
to introduce an operation on the edge set of any hypergraph
$\mathcal{H}$.

Let $e_1=\{v_1,v_2, \ldots, v_s\}$ and $e_2=\{w_1,w_2, \ldots,
w_r\}$ be two edges of $\mathcal{H}$ of sizes $s,r\geq 1$ with
possibly common vertices. We say that $e_1$ and $e_2$ \emph{are
merged} obtaining a new hypergraph $\mathcal{H'}$ if
$V(\mathcal{H'})=V(\mathcal{H})$ and
$E(\mathcal{H'})=(E(\mathcal{H})\setminus \{e_1,e_2\})\cup
\{v_1,\ldots, v_s,w_1, \ldots, w_r\}$. Denote by $e_1\cup e_2$
the set $\{v_1,\ldots, v_s,w_1, \ldots, w_r\}$ in which obviously
the possible common vertices are considered only once. Now, we
can extend this operation to \emph{merge a finite set of edges},
say $\{e_1, e_2, \ldots, e_t\}$ obtaining the hypergraph
$\mathcal{H'}$ with $E(\mathcal{H'})=(E(\mathcal{H})\setminus
\{e_1, e_2, \ldots, e_t\})\cup (e_1\cup e_2 \cup \ldots e_t)$.
Note that $|E(\mathcal{H'})|=|E(\mathcal{H})|-t+1$. We shall
apply the operation of \emph{merging edges} in regular
hypergraphs and for $e_i\neq e_j$ whenever $i\neq j$. Observe
that if $\mathcal{H}$ is $k-$regular and $e_1, e_2, \ldots, e_t$
are pairwise disjoint edges, that is, they have no vertex in
common, then $\mathcal{H'}$ is $k-$regular (see Figure
\ref{merging}).

\begin{figure}[ht]
\begin{center}

\begin{tabular}{cccc}
\includegraphics[width=0.2\textwidth]{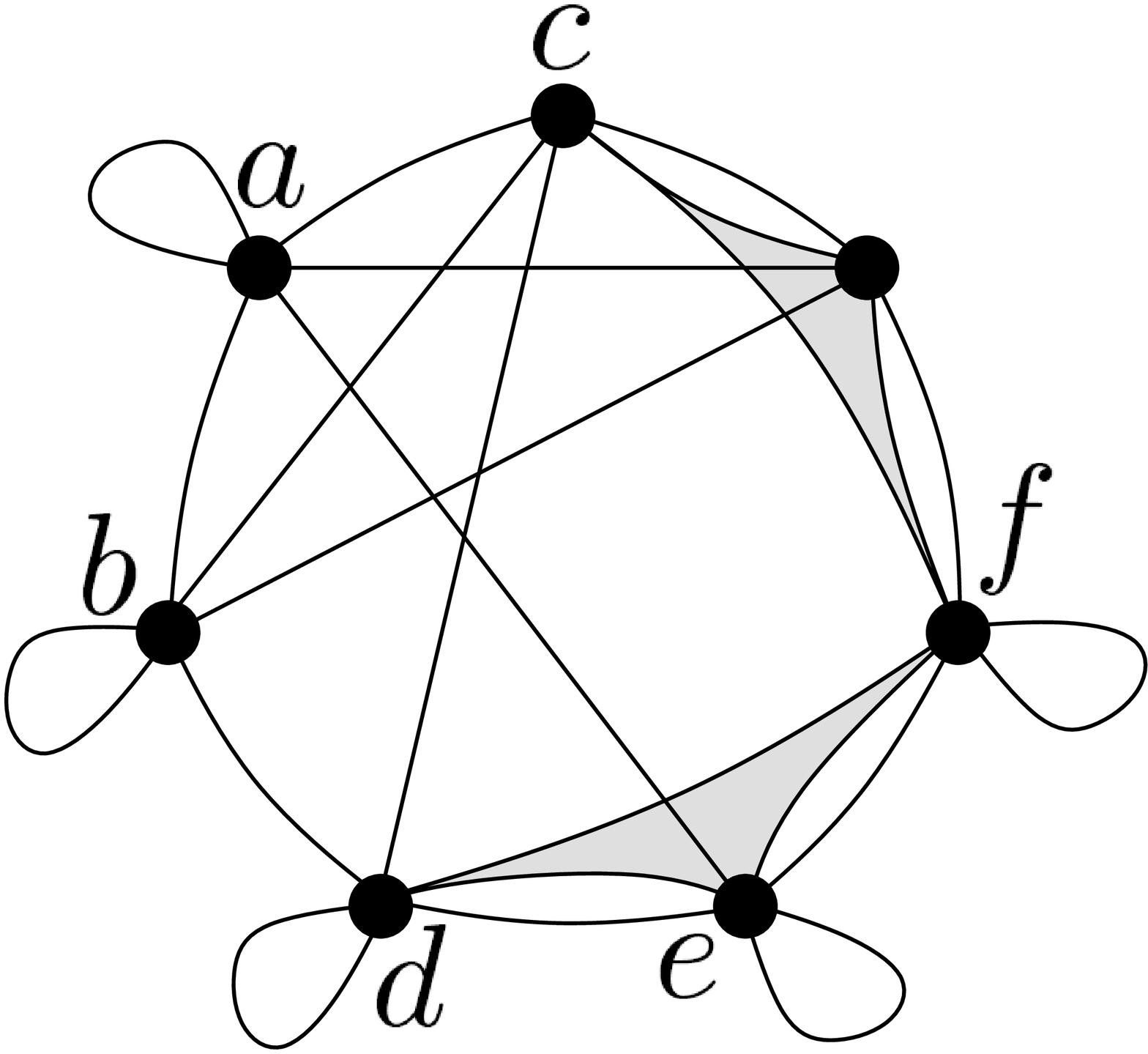}&
 & &
\includegraphics[width=0.2\textwidth]{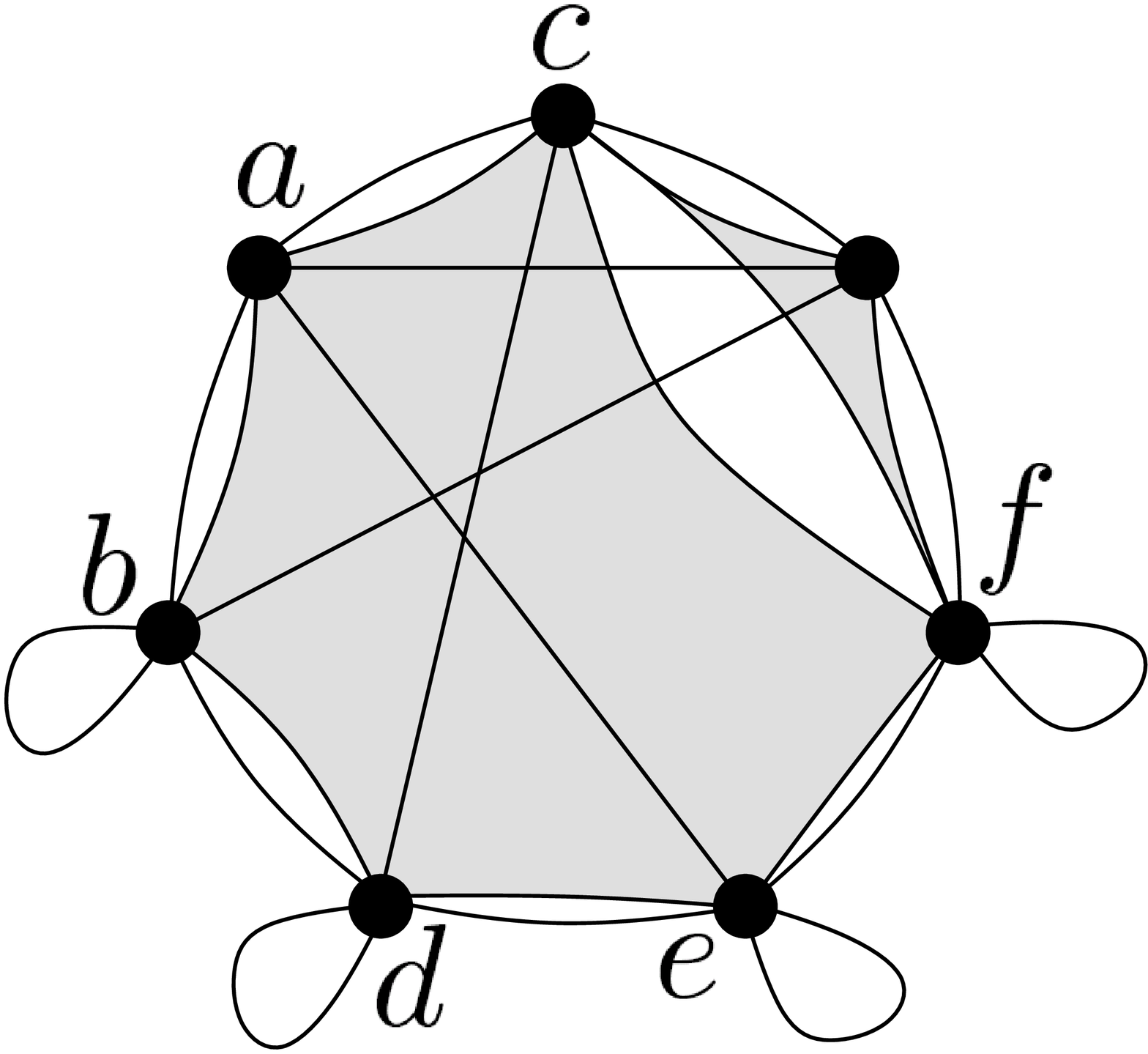}\\ & & \\
(a) & &  &  (b) \\

\end{tabular}
\end{center}
\caption{The edge $\{a,b,c,d,e,f\}$ of the hypergraph in (b) is
the result of merging three edges of the hypergraph in (a):
$\{a\}$, $\{b,c\}$ and $\{d,e,f\}$. Both hypergraphs are
$5-$regular.}\label{merging}
\end{figure}

\begin{theorem} \label{gaps}
Let $k$ and $d$ be two positives integers verifying that $3\leq
k+1\leq d$. For every $ n\in \mathbb{N}$ such that
$\lfloor\frac{(d-1)(k+1)}{2}\rfloor < n
<\lfloor\frac{d(k+1)}{2}\rfloor$ it holds $\Det(K_{n+1:k})=d$.

\end{theorem}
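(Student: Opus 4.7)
Setting $M=\lfloor d(k+1)/2\rfloor$ and $m=\lfloor (d-1)(k+1)/2\rfloor$, the hypothesis reads $m<n<M$. By Lemmas~\ref{conditions1} and~\ref{conditions}, establishing $\Det(K_{n+1:k})=d$ reduces to two tasks: (i) no $k$-regular simple hypergraph with $n$ or $n+1$ edges has fewer than $d$ vertices, and (ii) some such hypergraph on exactly $d$ vertices exists. My plan is to recycle the counting argument at the end of Theorem~\ref{discretes} for (i), and to start from the hypergraph $\mathcal{H}_{k,d}$ produced in Theorem~\ref{discretes} and shrink its edge count using the merging operation introduced just above the statement to handle (ii).

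For (i), suppose such a hypergraph $\mathcal{H}$ existed on $d'\leq d-1$ vertices. Theorem~2.8 of~\cite{GGL} (the same tool used in Theorem~\ref{discretes}) yields $d-1\geq d'\geq \lceil 2|E(\mathcal{H})|/(k+1)\rceil \geq \lceil 2(m+1)/(k+1)\rceil$. A brief parity check in the three subcases for $(k,d)$ used in Theorem~\ref{discretes} shows $2(m+1)/(k+1)=(d-1)+\epsilon$ with $\epsilon\in\{1/(k+1),\,2/(k+1)\}$, so the ceiling equals $d$, which contradicts $d'\leq d-1$.

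For (ii), recall that $\mathcal{H}_{k,d}$ lives on $d$ vertices, has exactly $M$ edges, and consists only of loops and size-$2$ edges (the matching edges of Case~1 or the Hamiltonian cycle edges of Cases~2 and~3). My basic move is: pick a loop $\{v\}$ and a size-$2$ edge $e$ with $v\notin e$, and merge them into the triple $e\cup\{v\}$. Each of the three involved vertices simply swaps one incident edge for another, so $k$-regularity is preserved while the total edge count drops by one. Iterating this move $t$ times for $0\leq t\leq M-m-1$ realises every edge count in $\{m+1,\dots,M\}$; since this range contains both $n$ and $n+1$, a corresponding determining set of size $d$ follows from Lemma~\ref{conditions}.

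The main obstacle is preserving simplicity across the iterated merges: a freshly produced triple can never duplicate a surviving loop (size $1$) or surviving size-$2$ edge, so the only danger is that two merges produce the same triple. I would neutralise this by always choosing the loop vertices used at successive merges to be pairwise distinct, and then, at step $i$, choosing the size-$2$ edge so that the resulting triple avoids the at most $i-1$ triples generated earlier. A quick count shows that one never performs more than $M-m-1\leq (k+2)/2$ merges, while $\mathcal{H}_{k,d}$ offers at least $d-1\geq k$ loops (even in Case~3, where one loop is already removed) and many more admissible size-$2$ edges than forbidden triples at every stage, so a valid choice always exists and the construction runs to completion.
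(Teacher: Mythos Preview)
Your argument is correct. For the lower bound~(i), you recycle the inequality $|V(\mathcal{H})|\geq\lceil 2|E(\mathcal{H})|/(k+1)\rceil$ from Theorem~2.8 of~\cite{GGL} exactly as in Theorem~\ref{discretes}; the paper instead redoes the size-edge-sequence count directly, but since that count is precisely what underlies the cited inequality, the two arguments are equivalent in substance. For the upper bound~(ii), your construction genuinely differs from the paper's: the paper performs a \emph{single} merge of $r+1$ pairwise disjoint size-$2$ edges (taken from one matching or one Hamiltonian cycle of $\mathcal{H}_{k,d}$) into one large hyperedge, splitting into four parity cases to ensure enough disjoint edges are available; you instead \emph{iterate} the smaller move ``merge a loop with a disjoint size-$2$ edge into a triple,'' trading a case analysis for a running simplicity check. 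Both strategies are valid. Your final paragraph is a bit informal, but the count goes through: at step~$i$ there remain at least $(k-1)(d-2)/2-(i-1)$ size-$2$ edges avoiding the fresh loop vertex~$v_i$, at most $i-1$ of which could recreate an earlier triple, and with $i\leq M-m-1\leq k/2$ and $d\geq k+1$ one checks $(k-1)(d-2)\geq 2(k-2)$, so a good choice always exists. The paper's single-merge version avoids this bookkeeping altogether, which is its main advantage; your version has the virtue that the merging move itself is the same in every parity case.
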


\begin{proof}
Since $\lfloor\frac{(d-1)(k+1)}{2}\rfloor < n
<\lfloor\frac{d(k+1)}{2}\rfloor$ then there exists $r\in
\mathbb{N}$ such that $n=\lfloor\frac{d(k+1)}{2}\rfloor-r$ with
$r\leq\lfloor \frac{k-1}{2}\rfloor$ whenever $d$ is odd or $d$ is
even and $k$ is odd, and $r\leq\frac{k}{2}$ whenever $k$ is even
and $d$ is even. We first prove that Det$(K_{n+1:k})\leq d$ by
distinguishing four cases according to the parity of $d$ and $k$.
By Lemma~\ref{conditions}, it suffices to show that there exists
a $k$-regular simple hypergraph with $d$ vertices and $n$ edges.

\,

\emph{Case 1.} $d$ even and $k$ even: Consider the hypergraph
$\mathcal{H}_{k,d}$ constructed as in case 1 of the proof of
Theorem \ref{discretes}. Since $k+1\leq d$ then $k\leq d-2$ and
so $r\leq \frac{k}{2}\leq \frac{d-2}{2}<\frac{d}{2}$. Hence we
can take $r+1\leq \frac{d}{2}$ edges of any perfect matching on
the vertices of $\mathcal{H}_{k,d}$ and merge them obtaining the
hypergraph $\mathcal{H'}_{k,d}$. Since the edges of the perfect
matching are disjoint then $\mathcal{H'}_{k,d}$ is $k-$regular.
Moreover, by construction
$d=|V(\mathcal{H}_{k,d})|=|V(\mathcal{H'}_{k,d})|$  and
$$|E(\mathcal{H'}_{k,d})|=|E(\mathcal{H}_{k,d})|-r=\frac{d(k+1)}{2}-r=n$$

\,

\emph{Case 2.} $d$ even and $k$ odd: Analogous to the previous
case but considering, $$r\leq \left\lfloor
\frac{k-1}{2}\right\rfloor \leq \left\lfloor
\frac{d-2}{2}\right\rfloor<\frac{d}{2}$$ and so $r+1\leq
\frac{d}{2}$.

\,

\emph{Case 3.} $d$ odd and $k$ odd: $\mathcal{H}_{k,d}$ is
constructed by considering $d$ vertices with loops attached at
each vertex, and $\frac{k-1}{2}$ pairwise disjoint hamiltonian
cycles (see case 2 of the proof of Theorem \ref{discretes}). Each
cycle has $d$ edges and $r\leq \frac{k-1}{2}\leq \frac{d-3}{2}$,
then we can merge $r$ disjoint edges of any hamiltonian cycle $C$.
Denote by $\{e_1,\ldots,e_d\}$ the edge set of $C$. Merge those
edges with even index plus the loop attached to the vertex in
which $e_1$ and $e_d$ are incident, $r+1$ disjoint edges in total.
Thus, we obtain a $k-$regular simple hypergraph
$\mathcal{H'}_{k,d}$ with $d$ vertices and
$n=\lfloor\frac{d(k+1)}{2}\rfloor-r$ edges.

\,

\emph{Case 4.} $d$ odd and $k$ even: $\mathcal{H}_{k,d}$ is
$k-$regular, has $d$ vertices and $\frac{d(k+1)-1}{2}$ edges (see
case 3 of the proof of Theorem \ref{discretes}). Note that we can
merge $r+1$ disjoint edges of a hamiltonian cycle $C$ of order $d$
since $r\leq \lfloor \frac{k-1}{2}\rfloor=\frac{k-2}{2}$ and so
$r+1\leq \frac{k}{2}\leq \frac{d-1}{2}<\lceil \frac{d}{2}\rceil$.
It suffices to consider $r+1$ pairwise disjoint edges among the
odd labeled edges of $C$. The resulting hypergraph
$\mathcal{H'}_{k,d}$  is simple, $k-$regular, has $d$ vertices and
$n=\lfloor\frac{d(k+1)}{2}\rfloor-r$ edges.

\

It remains to prove that Det$(K_{n+1:k})= d$. Suppose on the
contrary that Det$(K_{n+1:k})\leq d-1$, then by
 Lemma \ref{conditions1} there exists a $k$-regular simple hypergraph
$\mathcal{H}$ with $d-1$ vertices and either $n+1$ or $n$ edges.

Assume first that $\mathcal{H}$ has $n+1$ edges. The
size-edge-sequence $r_1\geq ... \geq r_{n+1}$, where $r_i$ is the
size of the edge $e_i$, satisfies (see \cite{GGL})
$$\displaystyle\sum_{i=1}^{n+1}r_i=\displaystyle\sum_{v\in
V(H)}\delta (v)=k(d-1).$$ Note that the number of loops in
$\mathcal{H}$ is at most $d-1$, so the other $n+1-(d-1)=n-d+2$
edges have size at least $2$. Hence, we obtain the following
inequalities about the sum on the edge sizes:

$$\left.\begin{array}{c}
 \displaystyle\sum_{i=n+1-(d-2)}^{n+1}r_i\geq d-1 \\
\displaystyle\sum_{i=1}^{n+1-(d-2)-1}r_i\geq 2(n-d+2)
\end{array}\right\}\Rightarrow k(d-1)=\displaystyle\sum_{i=1}^{n+1}r_i\geq d-1+2(n-d+2)= 2n-d +3$$

Therefore, $n\leq\frac{(d-1)(k+1)}{2} -1$ which is a contradiction
since $\lfloor\frac{(d-1)(k+1)}{2}\rfloor < n$.

Suppose now that $\mathcal{H}$ has $n$ edges. Then,

$$\left.\begin{array}{c}
 \displaystyle\sum_{i=n-(d-2)}^{n}r_i\geq d-1 \\
\displaystyle\sum_{i=1}^{n-(d-2)-1}r_i\geq 2(n-d+1)
\end{array}\right\}\Rightarrow k(d-1)=\displaystyle\sum_{i=1}^{n}r_i\geq d-1+2(n-d+1)= 2n-d +1$$

Hence, $n \leq \frac{(d-1)(k+1)}{2}$ which contradicts
$\lfloor\frac{(d-1)(k+1)}{2}\rfloor < n$.

\end{proof}

As it was said before, Theorems~\ref{discretes} and \ref{gaps}
provide the determining number for all Kneser graphs $K_{n:k}$
with $n\geq \frac{k(k+1)}{2}+1$. We want to stress the usefulness
of our technique by illustrating in Figure \ref{comparing}  the
values of $n$ and $k$ for which we have computed Det$(K_{n:k})$,
those obtained in \cite{boutin}, the trivial cases, and the
values of $n$ and $k$ for which Det$(K_{n:k})$ remains to compute.

\begin{figure}[ht]
\begin{center}
\includegraphics[width=65mm]{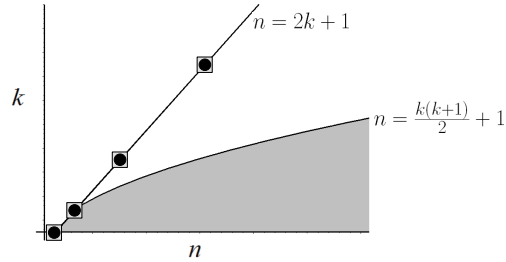} %%% solo para convertir directamente de .tex a .pdf
%\end{center}
\caption{The shadow area corresponds to the values of $n$ and $k$
for which the determining number is provided by Theorems
\ref{discretes} and \ref{gaps}, the squared points are the values
obtained in \cite{boutin}. For $n<2k+1$ we have trivial cases.
The values that remain to compute are those on the line $n=2k+1$
with $n\neq 2^r-1$ and all the values in between the line
$n=2k+1$ and the curve $n=\frac{k(k+1)}{2}+1$. }\label{comparing}
\end{center}
\end{figure}

\section{Kneser graphs with fixed determining number}

In \cite{boutin}  Boutin characterizes all Kneser graphs with
determining numbers 2, 3 or 4, for which she has to assemble a
heavy machinery. Our technique allows us to prove the same results
but with shorter proofs. This is done in Proposition
\ref{given234} below. Moreover, the approach of determining sets
by hypergraphs lets us go further, obtaining all Kneser graphs
with determining number 5. We first need a technical lemma.

\begin{lemma} \label{cuentas}
Let $\mathcal{H}$ be a $k$-regular simple hypergraph with $d$
vertices and $m$ edges. Then the following statements hold:

\begin{enumerate}
\item[{\rm (a)}] \label{cuentas1} $k\leq 2^{d-1}$ and $m\leq 2^{d}-1$.
\item[{\rm (b)}]\label{cuentas2} If $m> d+\binom{d}{2}$ then $kd\geq 3m-2d-\binom{d}{2}$.
\item[{\rm (c)}] \label{cuentas3} If $m> d+\binom{d}{2}+\binom{d}{3}$ then $kd\geq 4m-3d-2\binom{d}{2}-\binom{d}{3}.$
\end{enumerate}

\end{lemma}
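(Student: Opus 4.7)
The plan is to prove all three statements by counting arguments on the subsets of $V(\mathcal{H})$, exploiting the fact that a simple hypergraph with $d$ vertices has its edges among the $2^d-1$ non-empty subsets of the vertex set, classified by size.

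For part (a), I would observe that since $\mathcal{H}$ is simple, its edges are pairwise distinct non-empty subsets of $V(\mathcal{H})$, giving $m\leq 2^d-1$ immediately. For the bound on $k$, the $k$ edges incident with a fixed vertex $v$ must also be pairwise distinct subsets containing $v$, and there are exactly $2^{d-1}$ such subsets, yielding $k\leq 2^{d-1}$.

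For parts (b) and (c), the main tool will be the handshake identity for hypergraphs, namely
\[
kd \;=\; \sum_{v\in V(\mathcal{H})}\deg(v) \;=\; \sum_{e\in E(\mathcal{H})}|e|.
\]
Since $\mathcal{H}$ is simple, the number of edges of size $1$ is at most $d$, the number of edges of size $2$ is at most $\binom{d}{2}$, and the number of edges of size $3$ is at most $\binom{d}{3}$. For part (b), the hypothesis $m>d+\binom{d}{2}$ forces at least $m-d-\binom{d}{2}$ edges to have size at least $3$; plugging the minimal possible sizes $1,2,\ldots$ into the handshake identity gives the lower bound
\[
kd \;\geq\; d \cdot 1 + \binom{d}{2}\cdot 2 + \bigl(m-d-\tbinom{d}{2}\bigr)\cdot 3 \;=\; 3m - 2d - \binom{d}{2}.
\]
For part (c), the stronger hypothesis $m>d+\binom{d}{2}+\binom{d}{3}$ similarly forces at least $m-d-\binom{d}{2}-\binom{d}{3}$ edges of size at least $4$, and the same kind of counting yields
\[
kd \;\geq\; d + 2\binom{d}{2} + 3\binom{d}{3} + 4\bigl(m - d - \tbinom{d}{2} - \tbinom{d}{3}\bigr) \;=\; 4m - 3d - 2\binom{d}{2} - \binom{d}{3}.
\]

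There is no real obstacle here: the only subtle point is justifying that, under the stated hypotheses, one is entitled to assume all the subsets of size up to $2$ (respectively $3$) are actually present as edges when estimating from below. This is legitimate because making more edges smaller only decreases $\sum_{e}|e|$, so the minimum of $\sum_e |e|$ subject to the simplicity constraint and a fixed number of edges $m$ is attained by filling sizes greedily from size $1$ upward; this is the configuration used in the computation above.
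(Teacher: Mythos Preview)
Your proposal is correct and essentially the same as the paper's proof: both use that a simple hypergraph on $d$ vertices has at most $\binom{d}{i}$ edges of size $i$, combine this with the handshake identity $kd=\sum_e |e|$, and bound the sum from below by the ``greedy'' configuration where edges fill up sizes $1,2,\ldots$ in order. The paper phrases the same estimate via the sorted size-edge sequence $r_1\ge\cdots\ge r_m$, partitioning it into blocks of lengths $d$, $\binom{d}{2}$, (and $\binom{d}{3}$ for part~(c)) whose entries are forced to be at least $1,2,3,(4)$ respectively---which is exactly your greedy lower bound written out termwise.
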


\begin{proof}

Statement (a) follows from the fact that the cardinality of the
power set $\mathcal{P}(V(\mathcal{H}))$ of the vertex set
$V(\mathcal{H})$ is $2^{d}$, and a hyperedge is a non-empty
subset of vertices.

To prove statement (b), assume that $m > d+\binom{d}{2}$ and
consider the size-edge sequence of $\mathcal{H}$, say  $r_1\geq
r_2\geq \ldots \geq r_{m-1}\geq r_m$. Then, $$\displaystyle
kd=\sum_{i=1}^{m}r_i = \sum_{i=m-d+1}^{m}r_i +
\sum_{i=m-\left(d+\binom{d}{2}\right)+1}^{m-d}r_i +
\sum_{i=1}^{m-\left(d+\binom{d}{2}\right)}r_i  \geq  $$ $$ \geq d
+ 2\binom{d}{2} + 3\left(m-d-\binom{d}{2}\right)=
3m-2d-\binom{d}{2}.$$

Suppose now that $m > d+\binom{d}{2}+\binom{d}{3}$. We have,
$$\displaystyle kd=\sum_{i=1}^{m}r_i = \sum_{i=m-d+1}^{m}r_i +
\sum_{i=m-\left(d+\binom{d}{2}\right)+1}^{m-d}r_i +
\sum_{i=m-\left(d+\binom{d}{2}+\binom{d}{3}\right)+1}^{m-\left(d+\binom{d}{2}\right)}r_i
\sum_{i=1}^{m-\left(d+\binom{d}{2}+\binom{d}{3}\right)}r_i \geq
$$ $$ \geq d + 2\binom{d}{2} + 3\binom{d}{3}
+4\left(m-d-\binom{d}{2}-\binom{d}{3}\right)=
4m-3d-2\binom{d}{2}-\binom{d}{3}.$$ Hence, statement (c) holds.

\end{proof}

The following result comprises Propositions 12, 13 and 14 of
\cite{boutin}. The statements are the same but we provide shorter
proofs by using hypergraphs.

\begin{proposition} \label{given234}

\begin{enumerate}
\item[{\rm (a)}] The only Kneser graph with determining number 2 is $K_{3:1}$.
\item[{\rm (b)}] The only Kneser graphs with determining number 3 are $K_{4:1},$ $K_{5:2}$ and $K_{7:3}$.
\item[{\rm (c)}] The only Kneser graphs with determining number 4 are $K_{5:1},$ $K_{6:2},$
$K_{7:2},$ $K_{8:3},$ $K_{9:3},$ $K_{9:4},$ $K_{10:4},$
$K_{11:4},$ $K_{11:5},$ $K_{12:5},$ $K_{13:6}$ and $K_{15:7}.$
\end{enumerate}
\end{proposition}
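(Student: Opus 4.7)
The plan is to deploy Lemma \ref{detmin} to turn the question into a finite combinatorial enumeration. For a fixed target determining number $d \in \{2, 3, 4\}$, a Kneser graph $K_{n:k}$ satisfies $\Det(K_{n:k}) = d$ if and only if there is a $k$-regular simple hypergraph on exactly $d$ vertices with $n$ or $n-1$ edges, and no such hypergraph exists on fewer than $d$ vertices. Lemma \ref{cuentas}(a) bounds the search range immediately: $k \leq 2^{d-1}$ and $m := |E(\mathcal{H})| \leq 2^d-1$. Combined with the blanket assumption $n \geq 2k+1$ and the double-counting identity $\sum_i r_i = kd$ on the edge-size sequence, only a short list of triples $(k, m, d)$ survives.

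First I would dispose of $d = 2$. The only non-empty subsets of a $2$-set are $\{v_1\}$, $\{v_2\}$, $\{v_1,v_2\}$, so $k \in \{1, 2\}$ and $m \leq 3$. A quick check shows that the only $k$-regular simple hypergraph yielding an admissible $n \geq 2k+1$ is $k=1$, $m=2$, consisting of two loops, which produces $K_{3:1}$. Next, for $d=3$, I would enumerate the $k$-regular simple hypergraphs on three vertices by splitting cases on $k \in \{1,2,3,4\}$ and, for each $k$, by the number of loops and of size-$2$ edges; this is a short list and leads precisely to the three graphs $K_{4:1}$, $K_{5:2}$, $K_{7:3}$ of part (b).

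For $d = 4$ the same strategy applies but the case analysis is noticeably longer. Here $k \in \{1, \ldots, 8\}$ and $m \leq 15$, and for each $k$ I would enumerate the possible edge-size sequences $(r_1,\ldots,r_m)$ with $\sum r_i = 4k$ and $r_i \in \{1,2,3,4\}$, then identify which collections of subsets of $[4]$ realize them as a simple $k$-regular hypergraph. For every such hypergraph the two candidates $n = m$ and $n = m+1$ must be tested against $n \geq 2k+1$; the survivors yield exactly the twelve Kneser graphs in (c), including the extremal $K_{15:7}$ coming from the hypergraph on $[4]$ consisting of all non-empty subsets except the size-$4$ one.

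To finish each part I need to rule out $\Det(K_{n:k}) < d$ for every Kneser graph in the list, i.e.\ the minimum-order condition in Lemma \ref{detmin}. This is handled by induction on $d$: the candidates produced at step $d$ are cross-checked against the (already complete) lists for $d' < d$, and for all other $(n,k)$ the bounds of Lemma \ref{cuentas}(a) together with $n \geq 2k+1$ immediately preclude a smaller hypergraph. The main obstacle is the bookkeeping in the $d=4$ case, where the number of feasible edge-size sequences forces a systematic organization, say by $k$ and then by the number of loops; nothing is conceptually hard, but care is needed to ensure no configuration is double-counted or overlooked.
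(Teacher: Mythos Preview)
Your approach is correct and rests on the same characterization (Lemma~\ref{detmin}) as the paper, but the two proofs diverge in how they carry out the enumeration for $d=3,4$. You propose a direct case analysis: list all $k$-regular simple hypergraphs on $d$ vertices via their edge-size sequences, then read off the admissible pairs $(n,k)$. The paper instead short-circuits most of this work by invoking Theorems~\ref{discretes} and~\ref{gaps}: once Lemma~\ref{cuentas} bounds $n$ and $k$, those theorems immediately give $\Det(K_{n:k})$ for every candidate with $n\geq \frac{k(k+1)}{2}+1$ (the bulk of the list), leaving only a handful of pairs in the range $2k+1\leq n\leq \frac{k(k+1)}{2}$ to be settled by exhibiting explicit hypergraphs. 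Your route is more self-contained and avoids dependence on the earlier structural theorems, at the price of a longer case analysis; the paper's route is shorter precisely because it cashes in on the general formula already proved.
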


\begin{proof}
\begin{enumerate}

\item[{\rm (a)}] Consider the Kneser graph $K_{n+1:k}$ and assume that Det$(K_{n+1:k})=2$.
By Lemma \ref{detmin}, there exists a $k$-regular simple
hypergraph with minimum order 2 and either $n$ or $n+1$ edges.
There are only three simple, regular hypergraphs with two
vertices: a pair of loops, an edge of size 2 and an edge of size
2 with a loop attached at each vertex. However, only the first
one is an associated hypergraph to the non-trivial Kneser graph
$K_{3:1}$.

\item[{\rm (b)}] Since every Kneser graph $K_{n+1:1}$ is isomorphic to the complete graph
$K_{n+1}$, then only $K_{4:1}$ can have determining number 3.
Consider now $K_{n+1:k}$ with $k\geq 2$ and  suppose that
Det$(K_{n+1:k})=3$. By Lemma \ref{detmin}, there exists a
$k$-regular simple hypergraph with minimum order 3 and either $n$
or $n+1$ edges. Then, Lemma \ref{cuentas} implies that $n\leq 7$
and $k\leq 4$. Since  $n+1\geq 2k+1$, we obtain the following
candidates: $K_{5:2}$, $K_{6:2}$, $K_{7:2}$, $K_{8:2}$, $K_{7:3}$
and $K_{8:3}$. By Theorems \ref{discretes} and \ref{gaps} it is
easy to check that the only Kneser graphs with
$\Det(K_{n+1:k})=3$ are $K_{5:2}$ and $K_{7:3}$. For instance,
the graph $K_{6:2}$ has determining number 4 since $n=5$ and
$\lfloor\frac{3(d-1)}{2}\rfloor < n <\lfloor\frac{3d}{2}\rfloor$
for $d=4$.

\item[{\rm (c)}] Reasoning as in the previous case, the only Kneser graph $K_{n+1:1}$ with determining number 4 is
$K_{5:1}$. Take $K_{n+1:k}$ with $k\geq 2$ and  assume that
Det$(K_{n+1:k})=4$. By Lemma \ref{detmin}, there exists a
$k$-regular simple hypergraph with minimum order 4 and either $n$
or $n+1$ edges. Hence, Lemma \ref{cuentas} implies that $2k\leq
n\leq 15$ and $2\leq k\leq 8$. Thus, we get a list of 42 candidate
graphs. When $n\geq \frac{k(k+1)}{2}$ (it happens for 29 graphs
among the 42) we can apply Theorems \ref{discretes} and
\ref{gaps} obtaining that only $K_{6:2}, K_{7:2}, K_{8:3},
K_{9:3}$ and $K_{11:4}$ have determining number 4.

Suppose now that $2k\leq n<\frac{k(k+1)}{2}$. Then, $4\leq k\leq
8$. When $k=4$, the possible values of $n$ are 8 or 9.
Figure~\ref{hypergraphsgiven4} shows $4$-regular hypergraphs with
order 4 and having 8 and 9 edges respectively. By Lemma
\ref{conditions}, we have Det$(K_{9:4})\leq 4$ and
Det$(K_{10:4})\leq 4$. Obviously, Det$(K_{9:4})=$Det$(K_{10:4})=4$
since they are not in the above list of graphs with determining
number either 2 or 3. When $k=5$, then $10\leq n \leq 14$. Also,
Lemma~\ref{cuentas} gives $n \leq 11$.
Figure~\ref{hypergraphsgiven4} shows that
Det$(K_{11:5})=$Det$(K_{12:5})=4$. Similarly, when $k$ is 6 or 7,
the only Kneser graphs with determining number 4 are $K_{13:6}$
and $K_{15:7}$, and Figure~\ref{hypergraphsgiven4} shows their
associated hypergraphs. Finally, when $k=8$ there are not
suitable values of $n$, and that completes the proof.

\end{enumerate}
\end{proof}

\begin{figure}[ht]
\begin{center}

\begin{tabular}{ccc}
\includegraphics[width=0.2\textwidth]{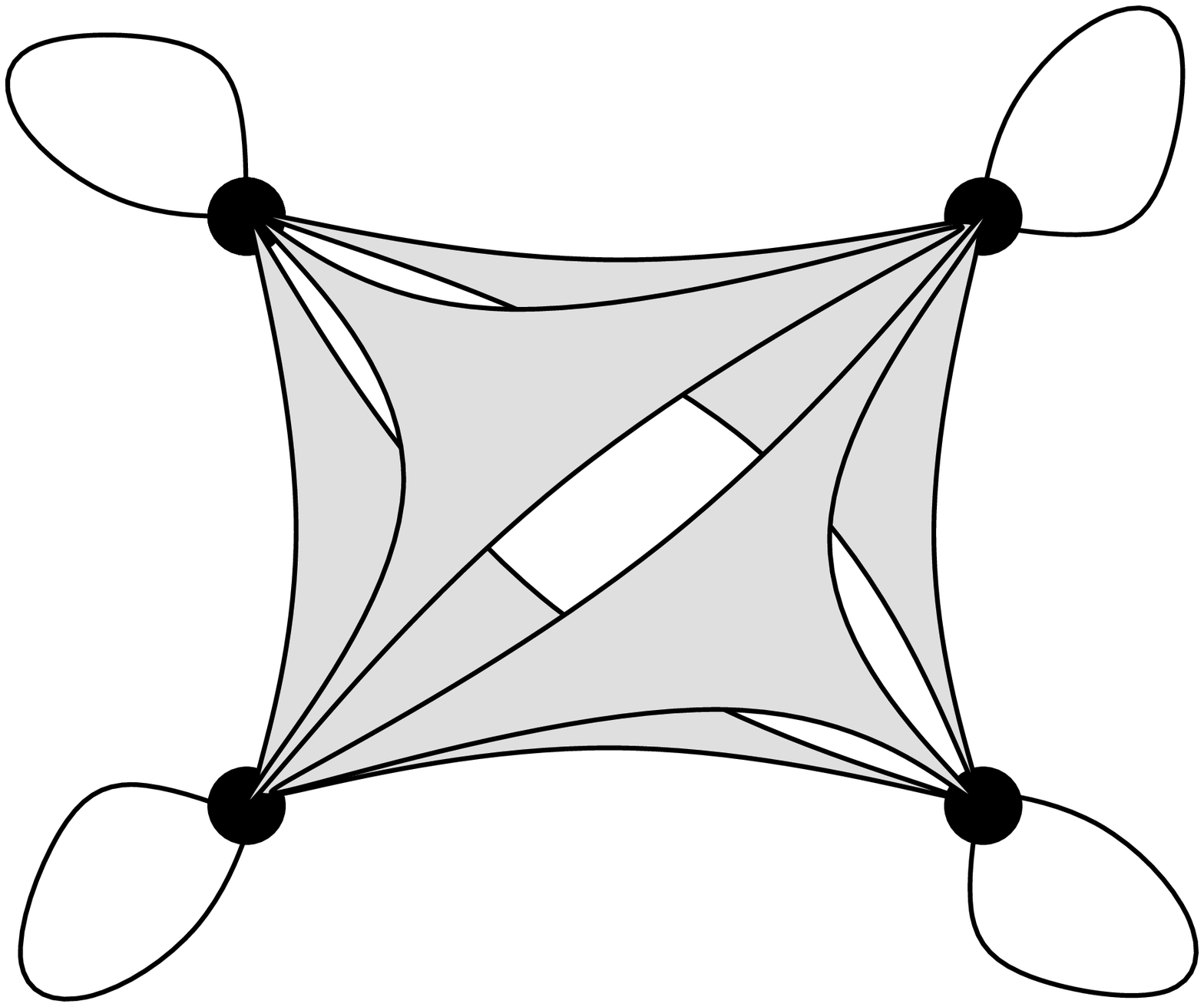}&
\includegraphics[width=0.2\textwidth]{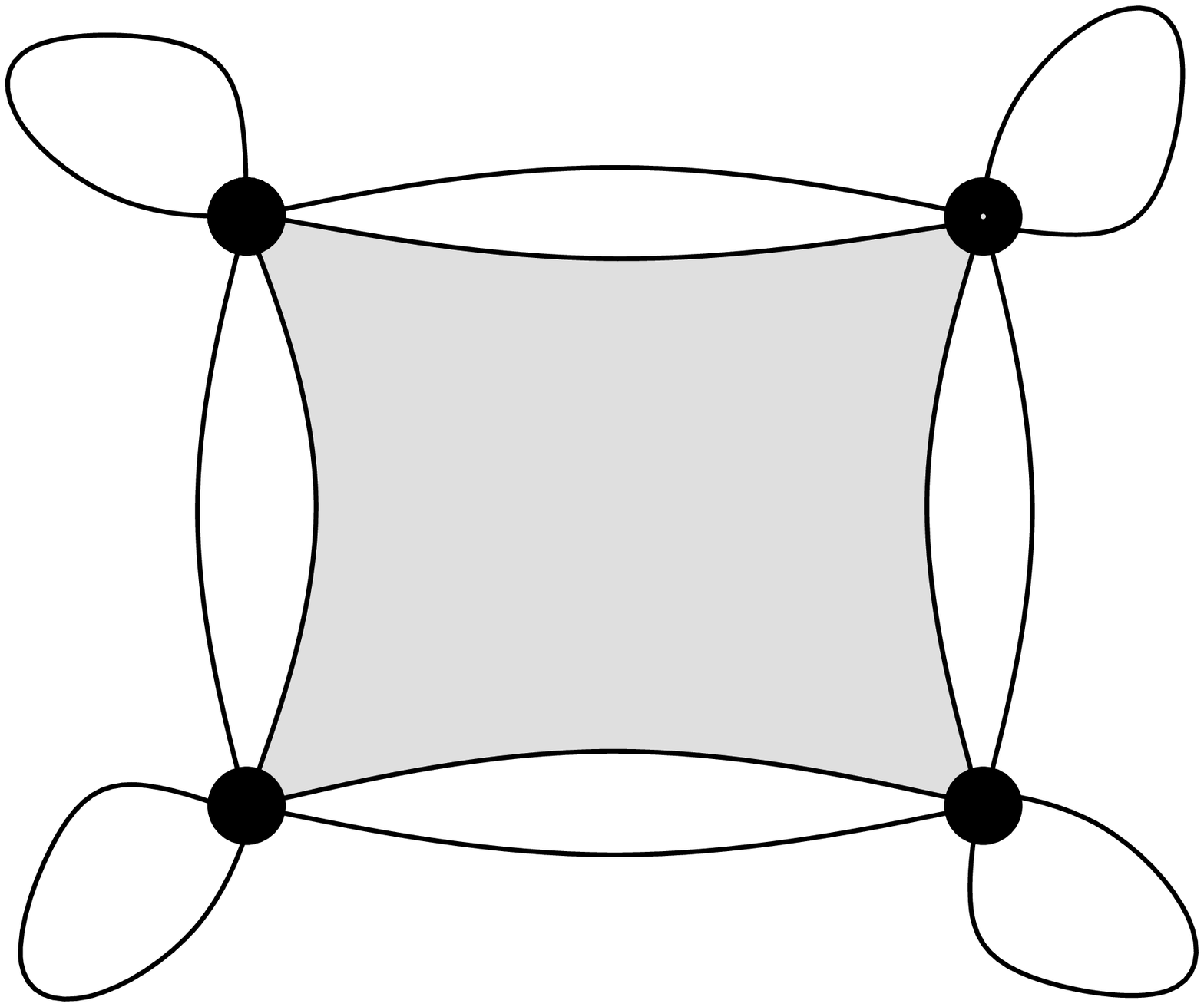} &
\includegraphics[width=0.2\textwidth]{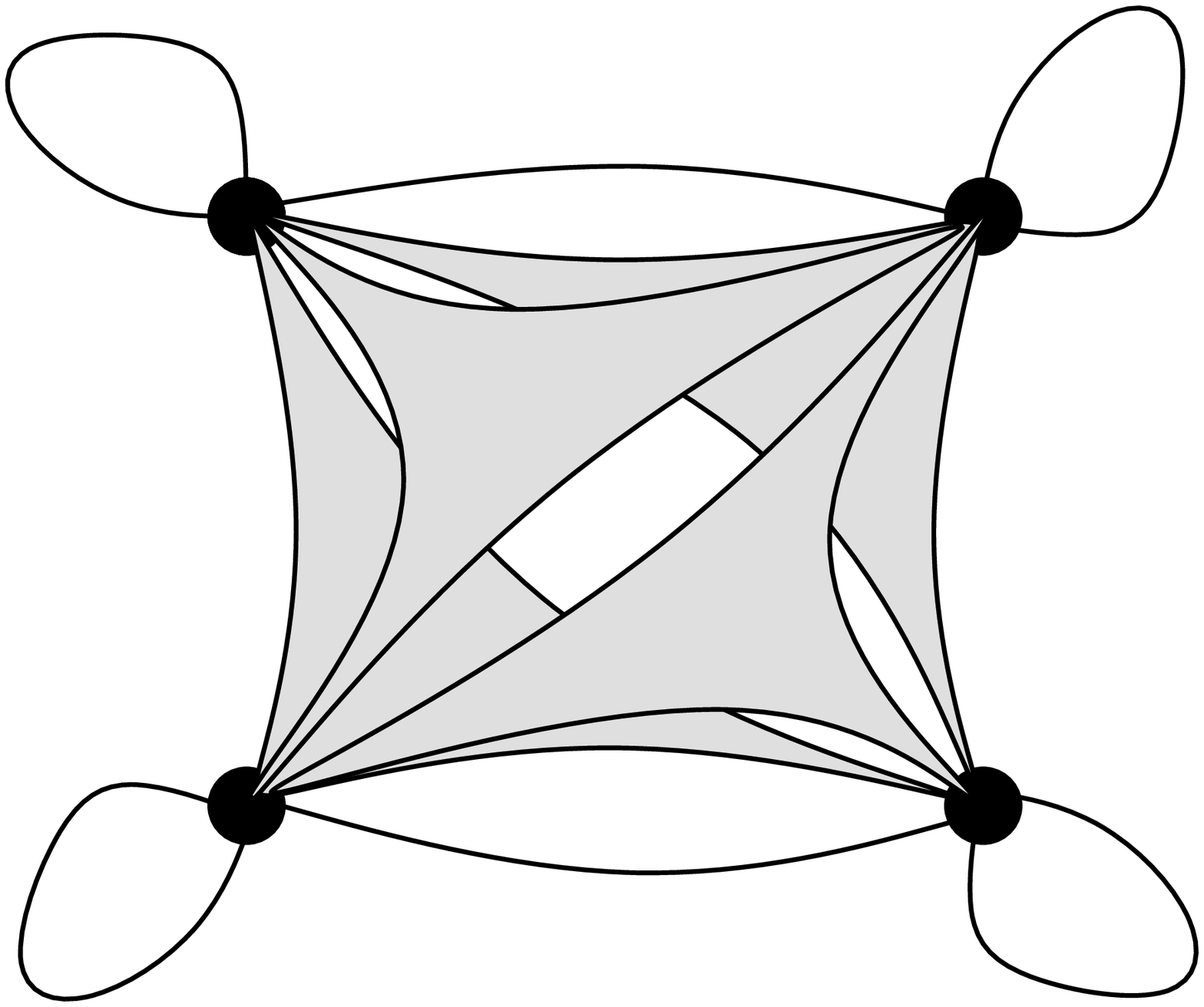}\\
$K_{9:4}$ & $K_{10:4}$ & $K_{11:5}$ \\
\includegraphics[width=0.2\textwidth]{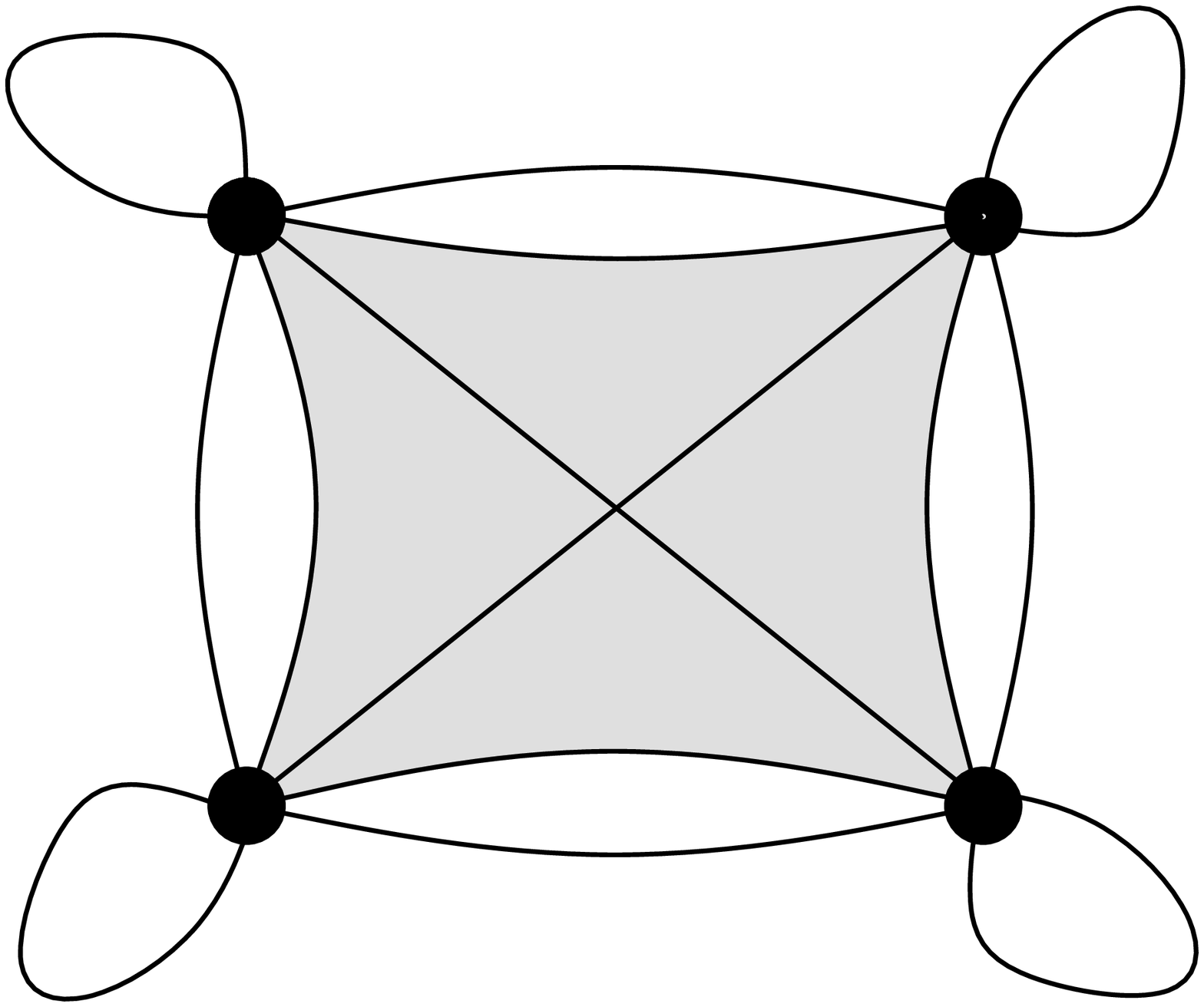}&
\includegraphics[width=0.2\textwidth]{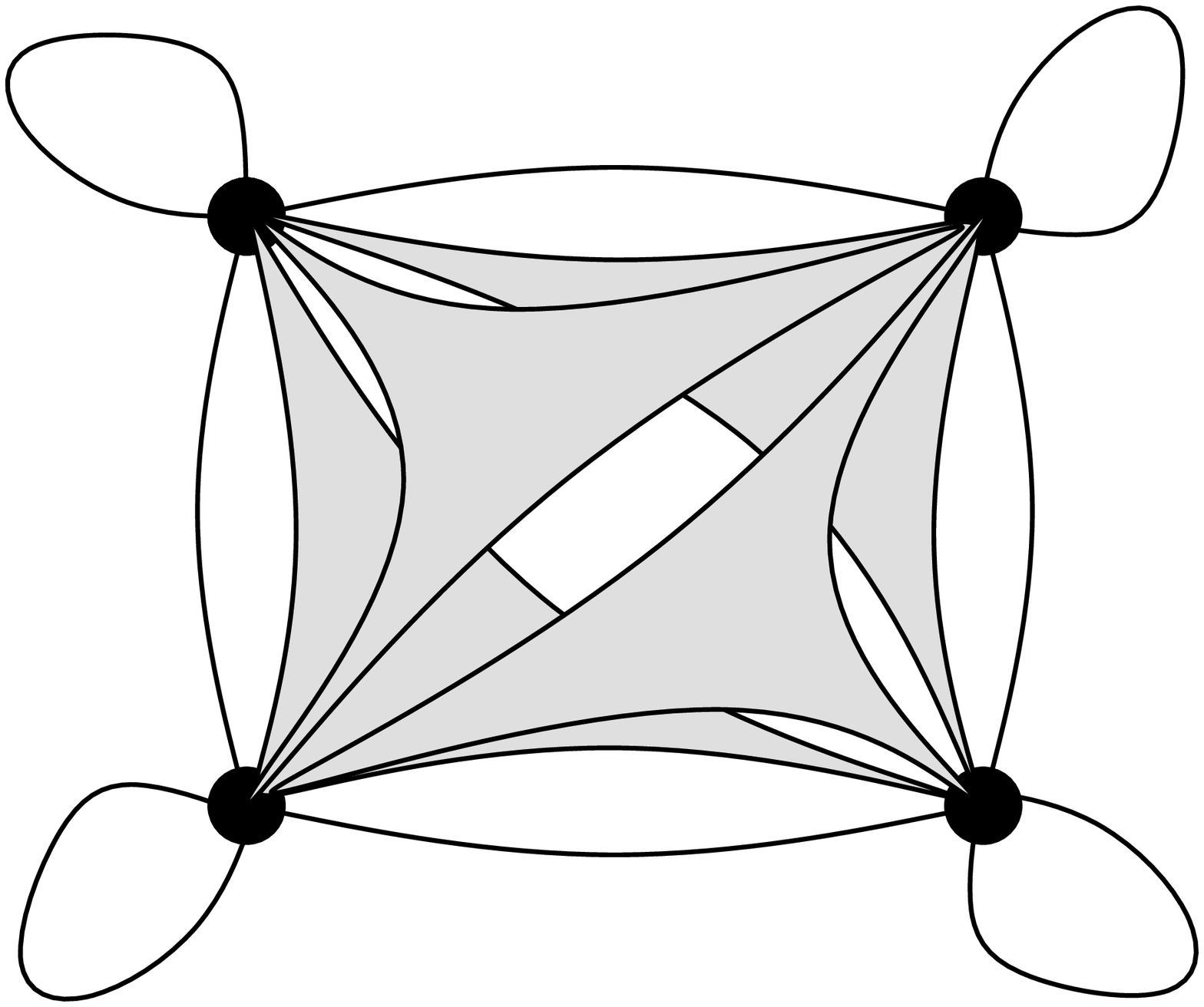}&
\includegraphics[width=0.2\textwidth]{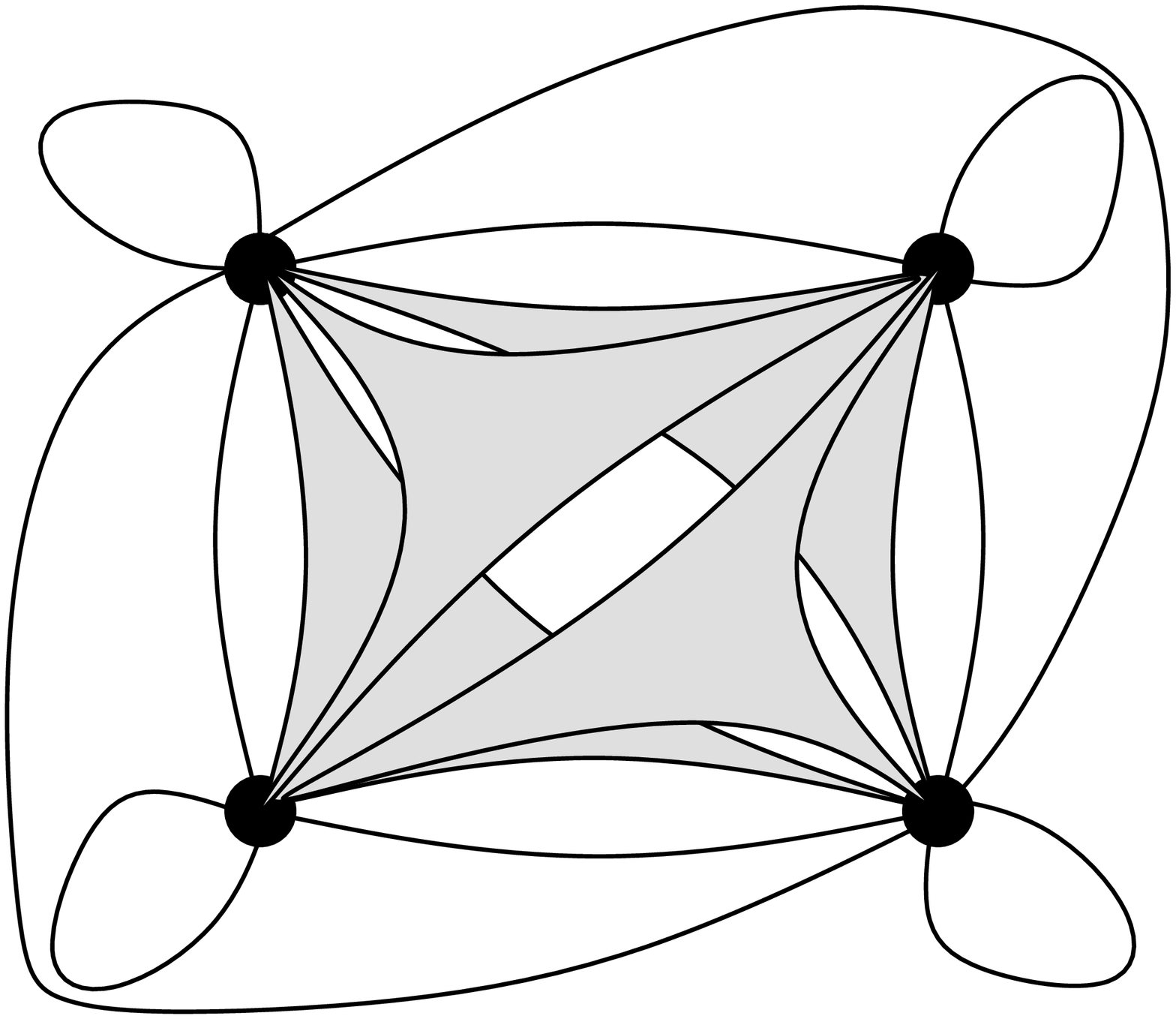}\\
$K_{12:5}$ & $K_{13:6}$ & $K_{15:7}$
\end{tabular}
\end{center}
\caption{Hypergraphs associated to Kneser graphs with determining
number 4.}\label{hypergraphsgiven4}
\end{figure}

Our technic lets us go further obtaining all Kneser graphs with
determining number 5.

\begin{proposition}\label{given5}
The Kneser graphs with determining number 5 are $K_{6:1},$
$K_{8:2},$ $K_{10:3},$ $K_{11:3}$, $K_{12:4},$ $K_{13:4}$,
$K_{13:5},$ $K_{14:5}$, $K_{15:5},$ $K_{16:5},$ $K_{14:6},$
$K_{15:6}$, $K_{16:6},$ $K_{17:6},$ $K_{16:7},$ $K_{17:7}$,
$K_{18:7},$ $K_{19:7},$ $K_{17:8},$ $K_{18:8}$, $K_{19:8},$
$K_{20:8},$ $K_{21:8},$ $K_{19:9},$ $K_{20:9}$, $K_{21:9},$
$K_{22:9},$ $K_{21:10},$ $K_{22:10}$, $K_{23:10},$ $K_{24:10},$
$K_{23:11},$ $K_{24:11}$, $K_{25:11},$ $K_{26:11},$ $K_{25:12},$
$K_{26:12}$, $K_{27:12},$ $K_{27:13},$ $K_{28:13}$, $K_{29:14},$
 and $K_{31:15}.$
\end{proposition}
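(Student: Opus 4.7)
The plan is to mirror the structure of the proof of Proposition~\ref{given234}(c) but with $d=5$ in place of $d=4$. First I would dispatch the case $k=1$: since $K_{n+1:1}\cong K_{n+1}$ has determining number $n$, only $K_{6:1}$ qualifies. For $k\geq 2$, if $\Det(K_{n+1:k})=5$, Lemma~\ref{detmin} provides a $k$-regular simple hypergraph with exactly $5$ vertices and either $n$ or $n+1$ edges. Lemma~\ref{cuentas}(a) then forces $k\leq 2^{4}=16$ and $n+1\leq 2^{5}-1=31$, and combined with the trivial restriction $n+1\geq 2k+1$ this cuts the search to a finite (though sizeable) list of candidates.

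Next I would split the candidates in two groups according to where they fall relative to the curve $n+1=\frac{k(k+1)}{2}+1$. For those candidates with $n+1\geq\frac{k(k+1)}{2}+1$, the determining number is known explicitly by Theorems~\ref{discretes} and~\ref{gaps}, so I would just read off which of them equal $5$. Notice that Theorem~\ref{gaps} requires $k+1\leq d=5$, so this group settles completely all pairs with $k\leq 5$ beyond the trivial line, accounting for the entries of the list with small $k$ and, in particular, singling out $K_{8:2}$, $K_{10:3}$, $K_{11:3}$, $K_{12:4}$, $K_{13:4}$, $K_{13:5}$, $K_{14:5}$, $K_{15:5}$ and $K_{16:5}$.

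The remaining candidates lie in the strip $2k+1\leq n+1<\frac{k(k+1)}{2}+1$ with $k\in\{6,7,\ldots,15\}$. For the positive direction, for each Kneser graph appearing in the statement I would exhibit an explicit $5$-vertex $k$-regular simple hypergraph with the required number of edges, gathered in a figure analogous to Figure~\ref{hypergraphsgiven4}; Lemma~\ref{conditions} then yields $\Det(K_{n+1:k})\leq 5$, and equality follows because the graph is not among those of determining number $\leq 4$ previously classified in Proposition~\ref{given234}. For the negative direction, I would rule out the remaining candidates by invoking Lemma~\ref{cuentas}(b) and~(c): for $d=5$ they read $5k\geq 3m-20$ when $m>15$ and $5k\geq 4m-45$ when $m>25$, which, together with $m\in\{n,n+1\}$, are strong enough to exclude every candidate not listed (in particular they force $k\geq 16$ when $m=31$, contradicting $n+1>2k$).

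The conceptually routine but combinatorially heavy part will be the small-$n$ regime: one must simultaneously produce many ad hoc hypergraph constructions for the graphs that do belong to the list and do bookkeeping with Lemma~\ref{cuentas} for those that do not. The main obstacle is to make sure the case analysis is exhaustive, i.e.\ that for each pair $(k,n)$ in the strip one has either an explicit $5$-vertex witness hypergraph or a proof of impossibility; the upper bounds $k\leq 16$ and $n\leq 30$ make the list finite, so the verification is feasible but somewhat tedious.
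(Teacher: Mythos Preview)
Your approach is essentially the paper's own: bound $k$ and $n$ via Lemma~\ref{cuentas}(a), dispatch the candidates with $n\geq\frac{k(k+1)}{2}$ by Theorems~\ref{discretes} and~\ref{gaps}, and handle the strip below the curve by explicit $5$-vertex hypergraph constructions (for inclusion) together with Lemma~\ref{cuentas}(b)--(c) (for exclusion).

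One correction to your bookkeeping, though. The curve $n+1=\frac{k(k+1)}{2}+1$ already lies strictly above the trivial line $n+1=2k+1$ for $k=4$, so the strip is nonempty for $k\in\{4,5,\ldots,15\}$, not only for $k\geq 6$. In particular $K_{13:5}$, $K_{14:5}$, $K_{15:5}$ have $n=12,13,14<15=\tfrac{5\cdot6}{2}$ and therefore fall in the strip; they are \emph{not} covered by Theorems~\ref{discretes} and~\ref{gaps} (Theorem~\ref{gaps} with $d=5$ requires $k\leq4$, and Theorem~\ref{discretes} with $d=k=5$ gives only $K_{16:5}$). The paper handles these three by drawing explicit $5$-regular simple hypergraphs on $5$ vertices with $12$, $13$, $14$ edges, exactly as you propose for the larger~$k$. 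Similarly the $k=4$ cases in the strip ($K_{9:4}$ and $K_{10:4}$) must still be visited and then discarded because Proposition~\ref{given234} already assigns them determining number~$4$. With this adjustment your sketch matches the paper's proof.
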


\begin{proof}
As it was said before, the Kneser graph $K_{n+1:1}$ is isomorphic
to the complete graph $K_{n+1}$ then only $K_{6:1}$ has
determining number 5. Consider the graph $K_{n+1:k}$ with $k\geq
2$ and suppose that Det$(K_{n+1:k})=5$. By Lemma \ref{detmin},
there exists a $k$-regular simple hypergraph $ \mathcal{H}$ with
minimum order 5 and either $n$ or $n+1$ edges. By Lemma
\ref{cuentas} it follows that $2k\leq n\leq 31$ and $2\leq k\leq
16$. Thus, the list of candidate graphs increases now to 196
Kneser graphs. When $n\geq \frac{k(k+1)}{2}$ (it happens for 157
graphs among the 196) we can apply Theorems \ref{discretes} and
\ref{gaps} obtaining that only $K_{8:2}$, $K_{10:3}$, $K_{11:3}$,
$K_{12:4}$, $K_{13:4}$ and $K_{16:5}$ have determining number 5.

Assume now that $2k\leq n<\frac{k(k+1)}{2}$ what implies that
$4\leq k\leq 16$. When $k=4$, the possible values of $n$ are 8 or
9 but they correspond to Kneser graphs with determining number 4
(see Proposition~\ref{given234}). When $k=5$, Lemma~\ref{cuentas}
gives $10\leq n \leq 14$. However for $n$ equal to either 10 or
11, we obtain Kneser graphs already studied in
Proposition~\ref{given234}, whose determining numbers are equal
to 4. The remaining values correspond to the Kneser graphs
$K_{13:5}$, $K_{14:5}$ and $K_{15:5}$ whose associated
hypergraphs with 5 vertices are illustrated in
Figure~\ref{hypergraphsgiven5}. Table~\ref{table} shows the rest
of the values of $n$ and $k$ for which Det$(K_{n+1:k})=5$. In all
the cases, it is easy to construct the associated hypergraph. On
the other hand, note that for $k=6$ and $n=12$ or  $k=7$ and
$n=14$, the corresponding Kneser graph has determining number 4.
For the remaining available values of $n$ and $k$, we use
conditions (b) and (c) of Lemma \ref{cuentas} in order to show
that the hypergraph $ \mathcal{H}$ does not exist and hence the
determining in those cases cannot be equal to 5. For instance, if
$k=6$ and $n=18$ then it is straightforward to check that
condition (b) of Lemma \ref{cuentas} does not hold when $d=5$.

\begin{figure}[ht]
\begin{center}

\begin{tabular}{ccc}
\includegraphics[width=0.2\textwidth]{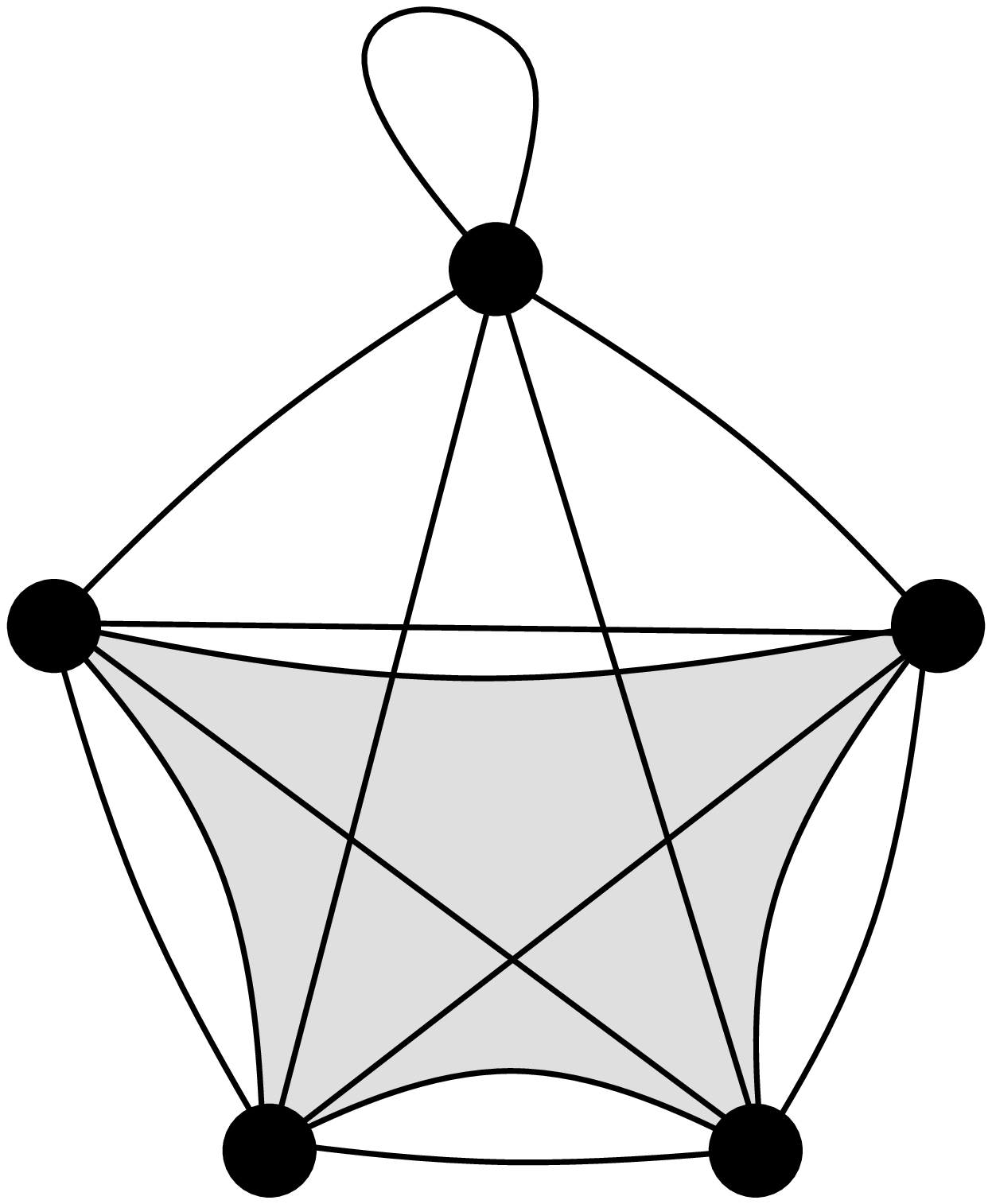}&
\includegraphics[width=0.2\textwidth]{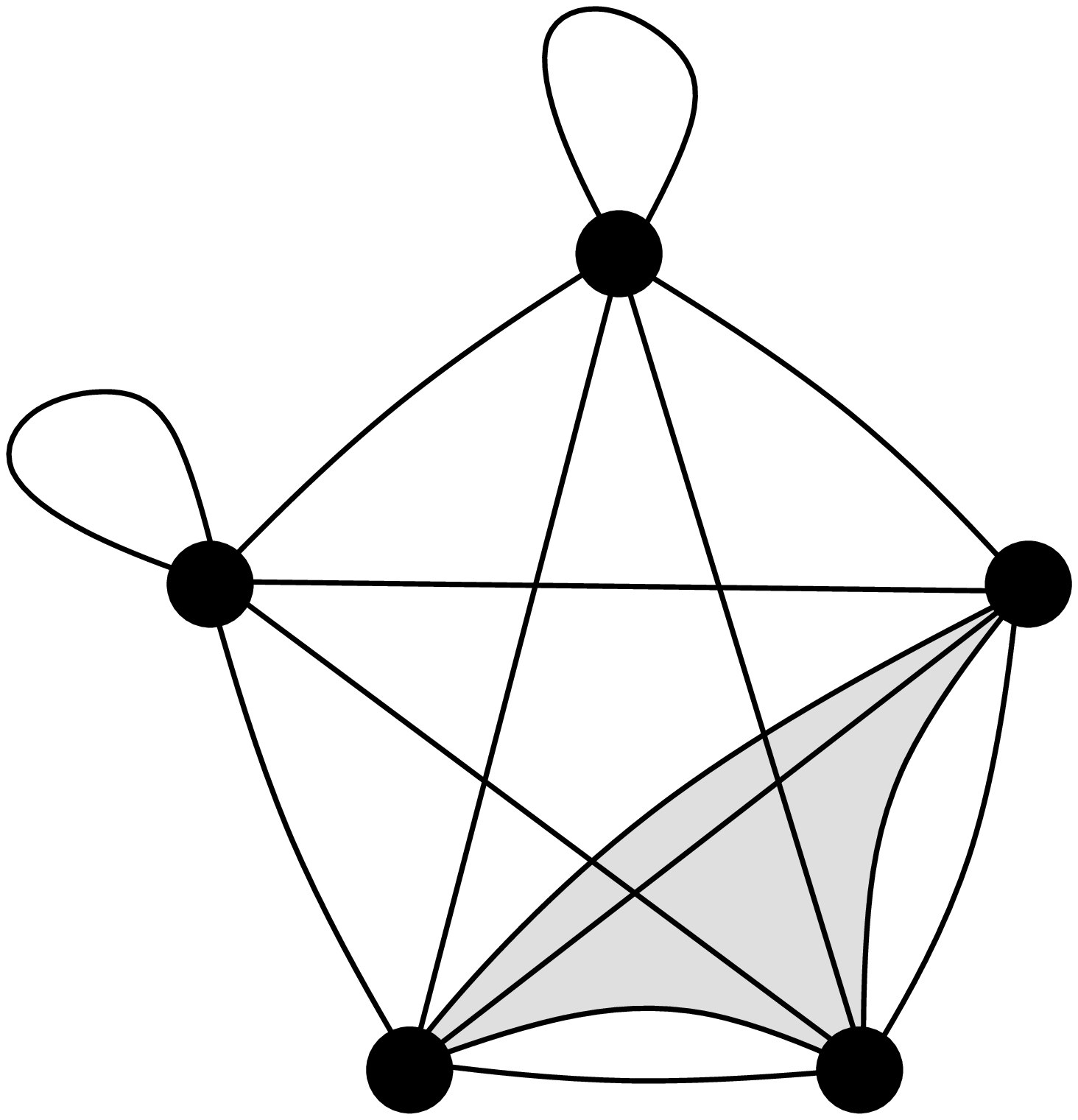} &
\includegraphics[width=0.2\textwidth]{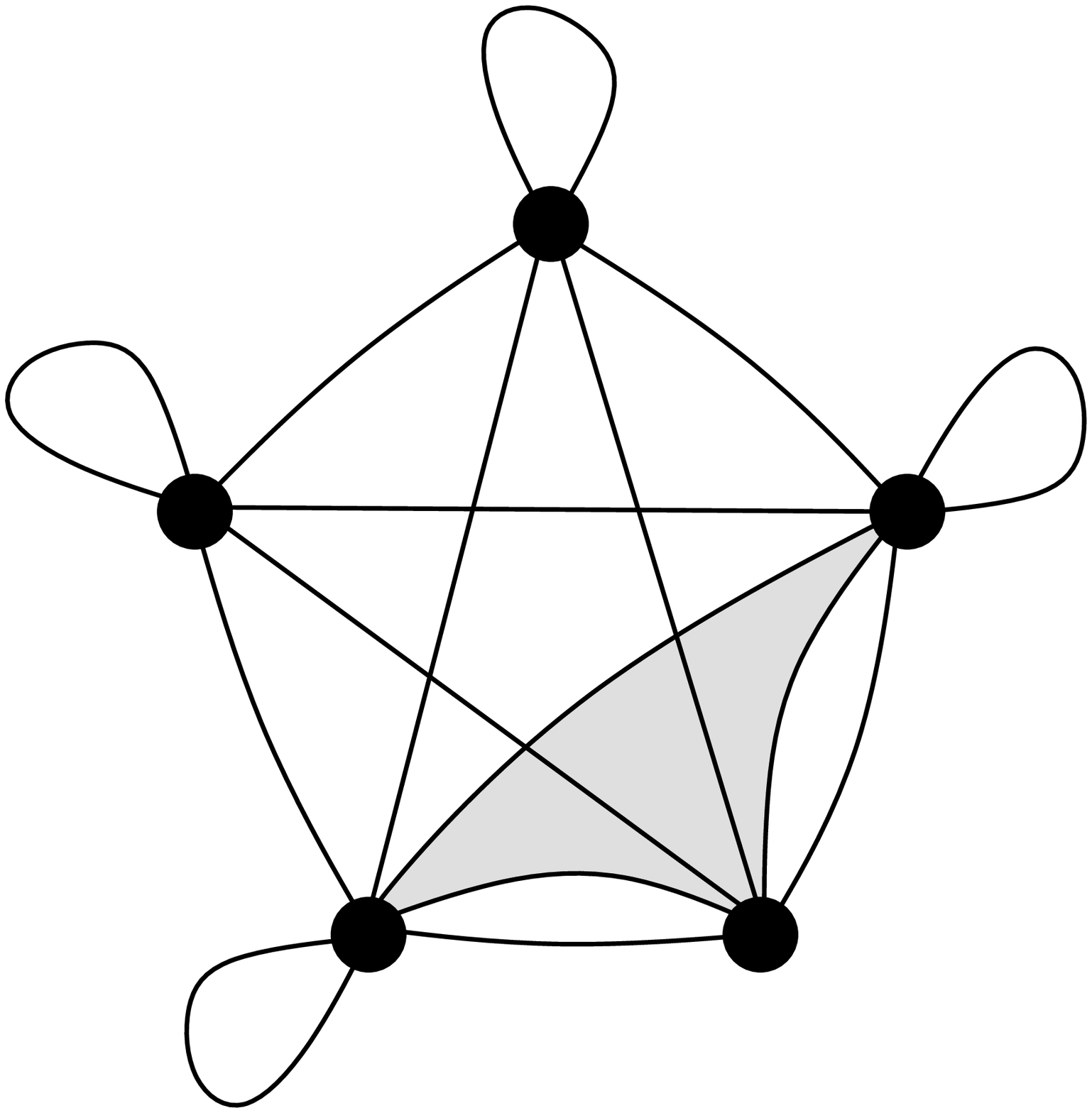}\\
$K_{13:5}$ & $K_{14:5}$ & $K_{15:5}$ \\

\end{tabular}
\end{center}
\caption{Hypergraphs associated to Kneser graphs with determining
number 5.}\label{hypergraphsgiven5}
\end{figure}

\begin{table}
\begin{center}
\begin{tabular}[center]{|c|c|} \hline

\textbf{$k$} & \textbf{$n$}  \\
\hline
%5       & 10,11,12,13,14\\
%\hline
6       & 13,14,15,16\\
\hline
7       & 15,16,17,18\\
\hline
8       & 16,17,18,19,20\\
\hline
9       & 18,19,20,21\\
\hline
10      & 20,21,22,23\\
\hline
11      & 22,23,24,25\\
\hline
12      & 24,25,26\\
\hline
13      & 26,27\\
\hline
14      & 28\\
\hline
15      & 30\\
\hline
\end{tabular}\caption{Values of $n$ and $k$ with $k\geq 6$ for which Det$(K_{n+1:k})=5$.} \label{table}
\end{center}
\end{table}

\end{proof}

\section{Kneser graphs $K_{n:k}$ with determining number $n-k$}

The characterization of determining sets in terms of hypergraphs
provided in Section 3 drives us to answer the question posed by
Boutin in \cite{boutin}: \emph{We know that Det$(K_{n:k})=n-k$ for
$K_{n:1}$ for any $n$, $K_{5:2}$ and $K_{6:2}$. Is there an
infinite family of Kneser graphs with $k\geq 2$ for which
Det$(K_{n:k})=n-k$?}

\begin{lemma}\label{nk}
Let $k$ and $n$ be two positive integers such that $2k\leq
n<\frac{k(k+1)}{2}$. Then, {\rm Det}$(K_{n+1:k})\leq k$.
\end{lemma}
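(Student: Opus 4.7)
By Lemma~\ref{conditions}, it suffices to construct, for every integer $n$ with $2k\le n<k(k+1)/2$, a $k$-regular simple hypergraph on $k$ vertices whose number of edges is either $n$ or $n+1$; equivalently, I will realize every integer $m\in[2k,k(k+1)/2]$ as the edge count of such a hypergraph. The starting point is the hypergraph $\mathcal{H}_{k,k}$ from Theorem~\ref{discretes}: regardless of the parity of $k$, it consists of the $k$ loops together with the $\binom{k}{2}$ edges of $K_k$, is $k$-regular and simple, and attains the maximum edge count $m=k(k+1)/2$.

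The idea is to lower the edge count by applying the merging operation defined just before Theorem~\ref{gaps}. A merge of $s\ge 2$ pairwise disjoint edges preserves $k$-regularity, reduces the edge count by $s-1$, and keeps the hypergraph simple provided the newly created edge does not already occur. Since every edge of $\mathcal{H}_{k,k}$ has size $1$ or $2$, any merge whose output has size at least $3$ is automatically safe against collisions with the original edges. Writing $r=k(k+1)/2-m\in[0,k(k-3)/2]$ for the desired reduction, for $r\le k-1$ a single merge suffices: if $r\ge 2$, merge $r+1$ pairwise disjoint loops into an edge of size $r+1\ge 3$; if $r=1$, merge one loop with a disjoint $K_k$-edge to form an edge of size $3$. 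For $r\ge k$, I will combine several merges in parallel, distributing them over pairwise edge-disjoint portions of $K_k$ via the standard decomposition used in Theorem~\ref{discretes}: $k-1$ perfect matchings when $k$ is even, or $(k-1)/2$ Hamiltonian cycles when $k$ is odd.

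The main obstacle is to verify that every $r\in[0,k(k-3)/2]$ is realizable by some valid combination of merges and that simplicity survives throughout. The only possible collisions are between newly created edges (all of size at least $3$) rather than with original edges (of size at most $2$), and these can be ruled out by choosing the merge groups so that their supports in $[k]$ are distinct (or, failing that, their total sizes differ). A case analysis on the parity of $k$ mirroring the proof of Theorem~\ref{gaps}, together with the boundary case $m=2k$ realized explicitly by the $k$ loops plus the $k$ complements of singletons in $[k]$, should then complete the argument.
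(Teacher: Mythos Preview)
Your framework coincides with the paper's: reduce to Lemma~\ref{conditions}, start from $\mathcal{H}_{k,k}$ with its $\binom{k+1}{2}$ edges, and reach each target $m\in[2k,\binom{k+1}{2}]$ by merging pairwise disjoint edges. Your explicit constructions at the two ends --- a single merge of $r+1$ loops (or of a loop with a disjoint $K_k$-edge) for $0\le r\le k-1$, and the direct $2k$-edge hypergraph consisting of the $k$ loops together with the $k$ sets $[k]\setminus\{i\}$ --- are both valid and are pleasant alternatives to what the paper does at those extremes.

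The gap is the middle range $k\le r<k(k-3)/2$, which you defer with ``should then complete the argument.'' This is precisely where the paper invests its effort (the five sub-cases labelled 1.1--2.3). The difficulty you anticipate is real: naively merging whole matchings or whole Hamiltonian cycles of $K_k$ produces the same edge $[k]$ each time, destroying simplicity after the first such merge. The paper's device is to assign to each vertex $v_i$ a set $\mathcal{E}_i$ (odd $k$) or $\mathcal{F}_i$ (even $k$) of pairwise disjoint $K_k$-edges none of which is incident with $v_i$; merging all of $\mathcal{E}_i$ yields the size-$(k{-}1)$ edge $[k]\setminus\{v_i\}$, and these are pairwise distinct for distinct $i$. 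Successively merging $\mathcal{E}_0,\mathcal{E}_1,\ldots$, with a final partial merge to hit the exact count, carries the edge total from $\binom{k+1}{2}$ down to $2k$ while preserving simplicity and $k$-regularity; the sub-cases are bookkeeping for how many full and partial $\mathcal{E}_i$'s are required, with auxiliary families $\mathcal{F}'_i,\mathcal{F}''_i$ needed when $k$ is even. Your outline is compatible with this fix, but as written it does not supply it, so what you have is a correct plan rather than a proof.
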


\begin{proof}
By Lemma~\ref{conditions}, it suffices to prove that there exists
a $k$-regular simple hypergraph $\mathcal{H}$ with $k$ vertices
and $n$ edges. Consider the $k-$regular simple hypergraph
$\mathcal{H}_{k,d}$ constructed in the proof of
Theorem~\ref{discretes}. Recall that, independently of the parity
of $k$ and $d$, the hypergraph $\mathcal{H}_{k,d}$ is
$k-$regular, has $d$ vertices and $\lfloor \frac{d(k+1)}{2}
\rfloor$ edges. Assume that $k=d$, then we have a hypergraph
$\mathcal{H}_{k,k}$ with $k$ vertices and $\frac{k(k+1)}{2}$
edges.

Let $\{v_0,v_1,...,v_{k-1}\}$ be the vertex set of
$\mathcal{H}_{k,k}$. We distinguish two cases according to the
parity of $k$. Note that all the indices below are taken modulo
$k$.

\,

\emph{Case 1.} $k$ odd: $\mathcal{H}_{k,k}$ is the hypergraph
formed by $k$ vertices with loops attached at each vertex, and
$\frac{k-1}{2}$ pairwise disjoint hamiltonian cycles of $K_k$ (see
case 2 of Theorem \ref{discretes}). We assign the following set
of $\frac{k-1}{2}$ edges to each vertex $v_i\in
V(\mathcal{H}_{k,k})$ (see Figure \ref{finallemma1}):
$$\mathcal{E}_i=\{\{v_{i-1},v_{i+1}\},\{v_{i-2},v_{i+2}\},...,\{v_{i-\frac{k-1}{2}},v_{i+\frac{k-1}{2}}\}\}$$
Note that the edges of $\mathcal{E}_i$ are disjoint and two sets
$\mathcal{E}_i$, $\mathcal{E}_j$ have no edges in common whenever
$i\neq j$. These facts guarantee that in the process of merging
that we are going to describe next, the $k-$regularity is
preserved. We consider again two cases.

\begin{enumerate}

\item[1.1.] If $\frac{k(k+1)}{2}-\frac{k-1}{2}+1<
n<\frac{k(k+1)}{2}$ then merge a subset of $\frac{k(k+1)}{2}-n+1$
edges of $\mathcal{E}_0$, obtaining a $k-$regular hypergraph
$\mathcal{H}'_{k,k}$ with
$\frac{k(k+1)}{2}-(\frac{k(k+1)}{2}-n+1)+1=n$ edges.

\,

\item[1.2.] If $2k\leq n\leq\frac{k(k+1)}{2}-\frac{k-1}{2}+1$ then we can merge
the edges of at least one set $\mathcal{E}_i$ obtaining a
$k-$regular hypergraph which has a number of edges
 bigger or equal to $n$. If the number is $n$
then the process is concluded. Otherwise, suppose that we can
merge the edges of $s$ subsets with $0\leq s\leq k-1$, say
$\mathcal{E}_0, \mathcal{E}_2, \ldots ,\mathcal{E}_{s-1}$,
obtaining a $k-$regular hypergraph $\mathcal{H}'_{k,k}$ with
$\frac{k(k+1)}{2}-\frac{s(k-1)}{2}+s$ edges and verifying that
$$\frac{k(k+1)}{2}-\frac{(s-1)(k-1)}{2}+(s-1)<n<\frac{k(k+1)}{2}-\frac{s(k-1)}{2}+s$$
Then the edges of $\mathcal{E}_{s}$ cannot be merged since if so
the resulting hypergraph would have a number of edges smaller than
$n$. Hence, we proceed as in case 1.1 merging
$\frac{k(k+1)}{2}-\frac{s(k-1)}{2}+s-n+1$ edges of
$\mathcal{E}_s$. This process leads to a $k$-regular simple
hypergraph $\mathcal{H}$ with $k$ vertices and $n$ edges.

\end{enumerate}

\begin{figure}[ht]
\begin{center}

\begin{tabular}{ccc}
\includegraphics[width=0.25\textwidth]{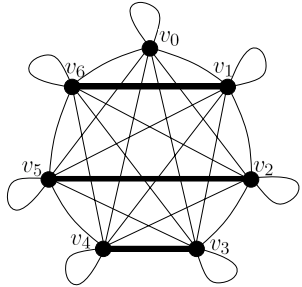}
\end{tabular}
\end{center}
\caption{Hypergraph $\mathcal{H}_{7,7}$. The selected edges form
the set $\mathcal{E}_{0}$.}\label{finallemma1}
\end{figure}

\emph{Case 2.} $k$ even: $\mathcal{H}_{k,k}$ is a hypergraph with
$k$ vertices, a loop attached at each vertex, and the edges of
$k-1$ pairwise disjoint perfect matchings of the complete graph
$K_k$ (see case 1 of Theorem \ref{discretes}). We distinguish
three cases.

\begin{enumerate}

\item[2.1.] If $\frac{k(k+1)}{2}-\frac{k}{2}(\frac{k}{2}-1)+\frac{k}{2}
\leq n<\frac{k(k+1)}{2}$ then we can follow an analogous process
of merging than in case 1 preserving also the $k-$regularity, but
instead of assigning the sets $\mathcal{E}_i$ to each vertex
$v_i$, we now assign the following set of edges to $v_i$ for
$i=0, \ldots, \frac{k}{2}-1$ (see Figure \ref{finallemma2}(a)):
$$\mathcal{F}_i=\{\{v_{i-1},v_{i+1}\},\{v_{i-2},v_{i+2}\},...,\{v_{i-\frac{k}{2}+1},v_{i+\frac{k}{2}-1}\}\}$$

Note first that the assignment is done to half of the vertices
since $\mathcal{F}_i=\mathcal{F}_{i+\frac{k}{2}}$ because of the
parity of $k$. Observe also that the edges of $\mathcal{F}_i$ are
disjoint and two sets $\mathcal{F}_i$, $\mathcal{F}_j$ have no
edges in common whenever $i\neq j$.

The process described in case 1 leads to a $k$-regular simple
hypergraph $\mathcal{H'}_{k,k}$  which is the result of merging
at most the edges of all sets $\mathcal{F}_i$ in
$\mathcal{H}_{k,k}$, that is, merging at most $
\frac{k}{2}(\frac{k}{2}-1)$ edges and obtaining in such case a
hypergraph with
$\frac{k(k+1)}{2}-\frac{k}{2}(\frac{k}{2}-1)+\frac{k}{2}$ edges.
Note that the edges obtained by this procedure are all of size
$k-2$ but at most one of smaller size.

\item[2.2.]
If
$\frac{k(k+1)}{2}-\frac{k}{2}(\frac{k}{2}-1)+\frac{k}{2}-\frac{k}{2}(\frac{k}{2}-2)+\frac{k}{2}=3k\leq
n<\frac{k(k+1)}{2}-\frac{k}{2}(\frac{k-2}{2}-1)$ then we first
merge all sets of edges $\mathcal{F}_i$, obtaining a hypergraph
$\mathcal{H}'_{k,k}$ with
$\frac{k(k+1)}{2}-\frac{k}{2}(\frac{k}{2}-1)+\frac{k}{2}$ edges.
We now assign the following set of edges to $v_i$ for $i=0,
\ldots, \frac{k}{2}-1$ (see Figure \ref{finallemma2}(b)):
$$\mathcal{F}'_i=\{\{v_{i-1},v_{i+2}\},\{v_{i-2},v_{i+3}\},...,\{v_{i-\frac{k}{2}+2},v_{i+\frac{k}{2}-1}\}\}$$

Again, we follow the procedure described in case 1 which gives a
$k$-regular simple hypergraph  that is the result of merging at
most the edges of all sets $\mathcal{F}'_i$ in
$\mathcal{H}'_{k,k}$, that is, merging at most $
\frac{k}{2}(\frac{k}{2}-2)$ edges and obtaining in such case a
hypergraph with
$\frac{k(k+1)}{2}-\frac{k}{2}(\frac{k}{2}-1)+\frac{k}{2}-\frac{k}{2}(\frac{k}{2}-2)+\frac{k}{2}$
edges. Observe that the edges obtained by this process are all of
size $k-4$ but at most one of smaller size.

\item[ 2.3.] If $2k\leq n < 3k$ then merge the sets $\mathcal{F}_i$
and  $\mathcal{F'}_i$, obtaining a hypergraph with $3k$ edges.
These edges are: $k$ loops, $\frac{k}{2}$ edges of size $k-2$,
$\frac{k}{2}$ edges of size $k-4$ and $k$ edges of size 2 forming
the cycle  $\{v_0,...v_{k-1}\}$. For every vertex $v_i$, consider
now the set of edges (see Figure \ref{finallemma2}(c)):
$$\mathcal{F}''_i=\{\{v_i\},\{v_{i+1},v_{i+2}\}\}$$ and merge the
required sets $\mathcal{F}''_i$ to attain a hypergraph with $n$
edges.

\end{enumerate}

\begin{figure}[ht]
\begin{center}

\begin{tabular}{ccc}
\includegraphics[width=0.25\textwidth]{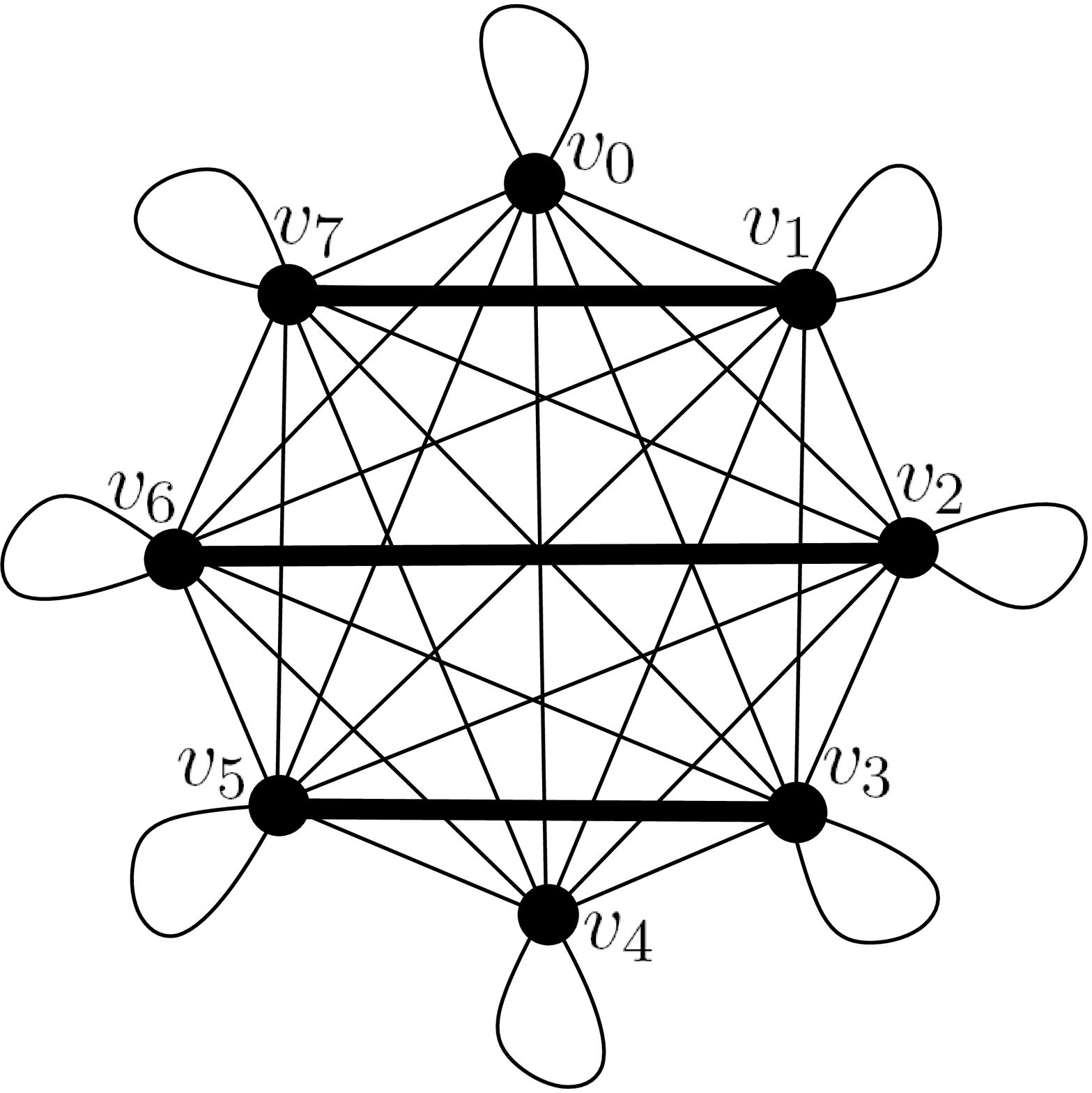}&
\includegraphics[width=0.25\textwidth]{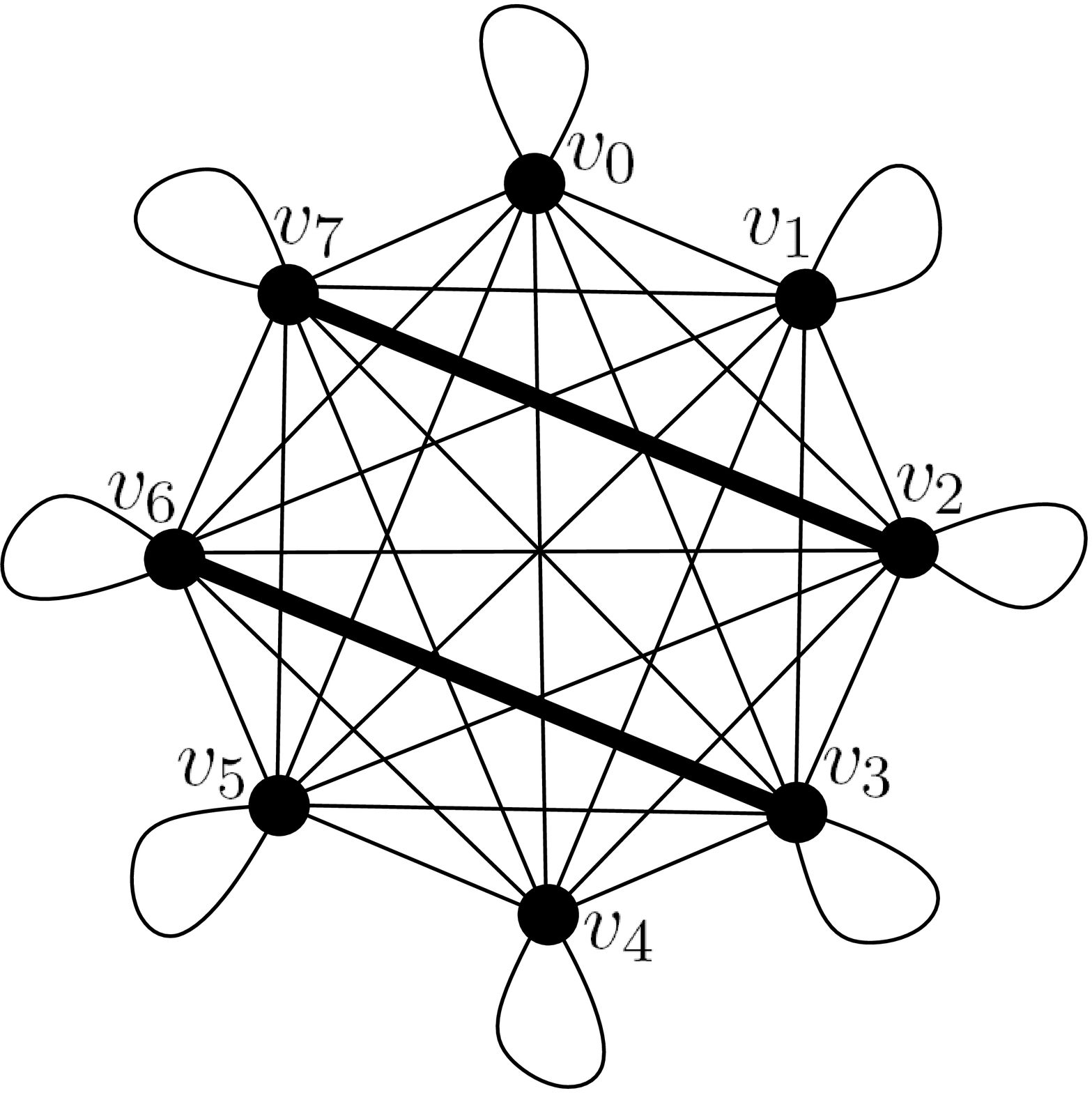}&
\includegraphics[width=0.25\textwidth]{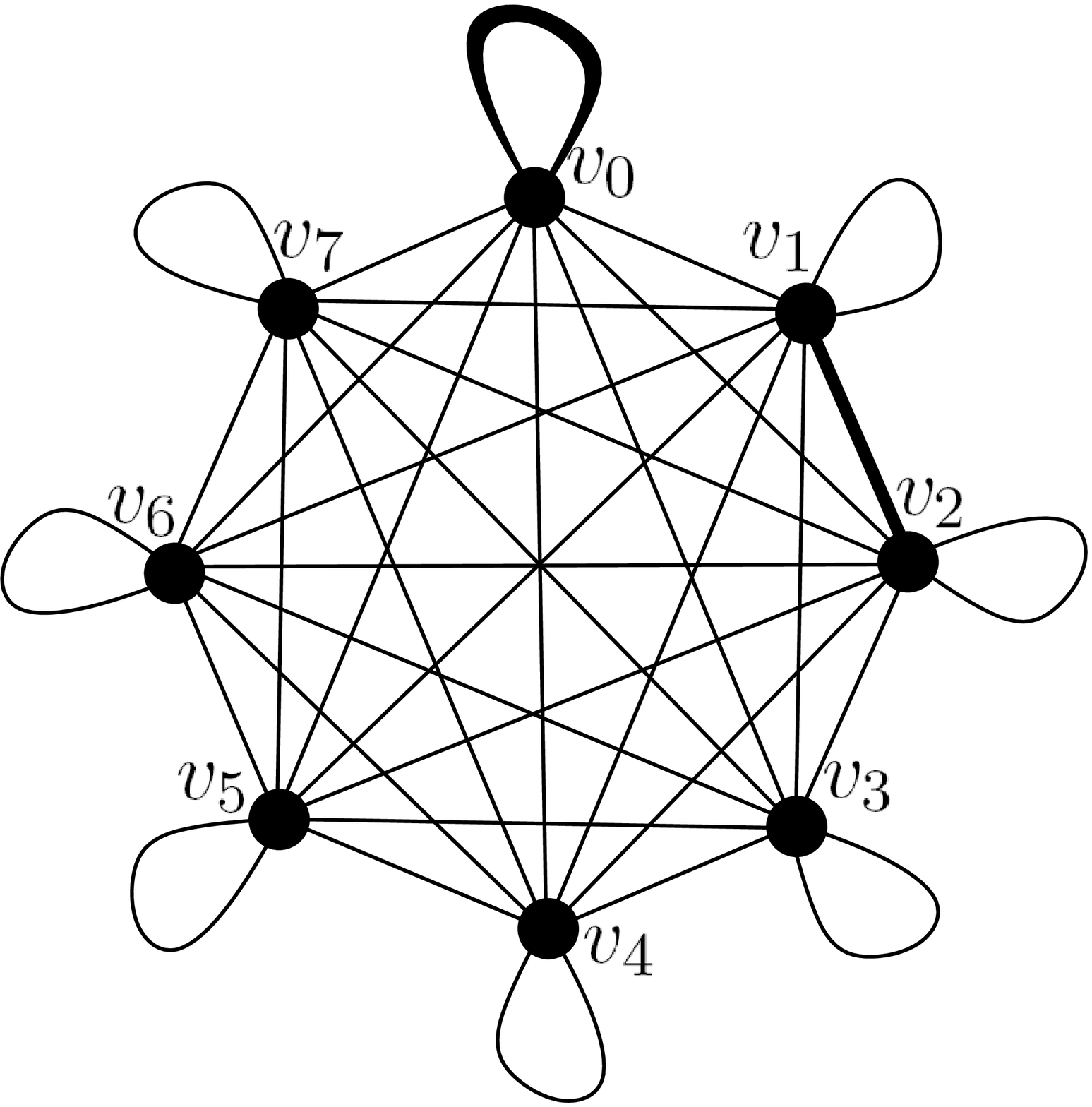}\\
(a) & (b) & (c)\\
\end{tabular}
\end{center}
\caption{Hypergraph $\mathcal{H}_{8,8}$. The selected edges form
the set: (a) $\mathcal{F}_0$, (b) $\mathcal{F}'_0$, (c)
$\mathcal{F}''_0$. }\label{finallemma2}
\end{figure}

\end{proof}

Now, we can formulate our main result in this section.

\begin{theorem}\label{n-k}
{\rm Det}$(K_{n+1:k})=n+1-k$ if and only if $K_{n+1:k}$ is
isomorphic to $K_{n+1:1}$, $K_{5:2}$ or $K_{6:2}$.

\end{theorem}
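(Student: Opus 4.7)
The plan is to split into the two directions. For the sufficiency ($\Leftarrow$), it suffices to verify each listed graph: since $K_{n+1:1}$ is the complete graph $K_{n+1}$, $\Det(K_{n+1:1})=n=n+1-k$; Proposition~\ref{given234} gives $\Det(K_{5:2})=3=5-2$ and $\Det(K_{6:2})=4=6-2$.

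For the necessity ($\Rightarrow$), the case $k=1$ is automatic (it yields the entire family $K_{n+1:1}$), so I would assume $k\ge 2$ and split on the size of $n$ relative to $\frac{k(k+1)}{2}$, which is exactly the dividing line between where Lemma~\ref{nk} applies and where Theorems~\ref{discretes} and \ref{gaps} apply. In the small range $2k\le n<\frac{k(k+1)}{2}$, Lemma~\ref{nk} yields $\Det(K_{n+1:k})\le k$; since $n\ge 2k$ forces $n+1-k\ge k+1>k$, the equality $\Det(K_{n+1:k})=n+1-k$ cannot occur here.

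In the large range $n\ge\frac{k(k+1)}{2}$, Theorems~\ref{discretes} and \ref{gaps} identify $d:=\Det(K_{n+1:k})$ exactly: $d$ is the unique integer with $\lfloor(d-1)(k+1)/2\rfloor<n\le\lfloor d(k+1)/2\rfloor$. Substituting $d=n+1-k$ into the lower inequality and clearing the floor gives the integer condition $(n-k)(k+1)\le 2n-1$, equivalent to $n\le k+2+\tfrac{1}{k-1}$. For $k\ge 3$ this forces $n\le k+2$, which contradicts $n\ge\frac{k(k+1)}{2}$ since $\frac{k(k+1)}{2}>k+2$ whenever $k\ge 3$. Only $k=2$ survives, where the bound becomes $n\le 5$; combined with the non-triviality condition $n\ge 2k=4$ we obtain $n\in\{4,5\}$, i.e.\ exactly $K_{5:2}$ and $K_{6:2}$. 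The matching upper bound $n\le\lfloor d(k+1)/2\rfloor$ is automatic in both cases, since it reduces to $n\ge k+1$.

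The only real care needed is handling the floor functions so that the integer arithmetic in the large range truly excludes $k\ge 3$; apart from that small piece of bookkeeping, the argument is a direct assembly of Lemma~\ref{nk}, Theorem~\ref{discretes}, and Theorem~\ref{gaps}, with no additional machinery required.
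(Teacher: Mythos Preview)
Your proof is correct and follows essentially the same approach as the paper: both argue the $(\Leftarrow)$ direction by direct verification and the $(\Rightarrow)$ direction by invoking Lemma~\ref{nk} in the range $2k\le n<\frac{k(k+1)}{2}$ and Theorems~\ref{discretes} and~\ref{gaps} in the range $n\ge\frac{k(k+1)}{2}$. The only difference is cosmetic: the paper treats the cases $n=\lfloor d(k+1)/2\rfloor$ and $\lfloor(d-1)(k+1)/2\rfloor<n<\lfloor d(k+1)/2\rfloor$ separately (with a further parity split), whereas you merge them into the single condition $\lfloor(d-1)(k+1)/2\rfloor<n\le\lfloor d(k+1)/2\rfloor$ and extract the clean bound $n\le k+2+\tfrac{1}{k-1}$, which is a slightly tidier way to reach the same conclusion.
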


\begin{proof}
($\Longrightarrow$) By Lemma \ref{nk}, it does not exist a Kneser
graph $K_{n+1:k}$ verifying that $2k\leq n<\frac{k(k+1)}{2}$ and
{\rm Det}$(K_{n+1:k})=n+1-k$ since {\rm Det}$(K_{n+1:k})\leq
k<n+1-k$. Then, we can assume that $n\geq\frac{k(k+1)}{2}$ and
$k\geq 2$. Indeed, when $k=1$ the Kneser graph $K_{n+1:1}$ is
isomorphic to the complete graph $K_{n+1}$ and so {\rm
Det}$(K_{n+1:k})=n$.

Suppose first that there exists $d\in \mathbb{N}$ with $d>2$ such
that $n=\lfloor\frac{d(k+1)}{2}\rfloor$ . Then $d\geq k$ and by
Theorem \ref{discretes} we have {\rm Det}$(K_{n+1:k})=d$. Thus, it
suffices to prove that $d<\lfloor\frac{d(k+1)}{2}\rfloor+1-k$
except for $d=3$ and $k=2$ which is the graph $K_{5:2}$. Suppose
on the contrary that either $d\neq 3$ or $k\neq 2$ and $d\geq
\lfloor\frac{d(k+1)}{2}\rfloor+1-k$. We distinguish two cases.

\,

\emph{Case 1.} $d$ even or $k$ odd: The contradiction follows
since $(2-k-1)d\geq 2(1-k)$ and so $d\leq 2$.

\emph{Case 2.} $d$ odd and $k$ even: We have $2d\geq d(k+1)-2k+1$
what easily implies that $(k-1)(d-2)\leq 1$. Clearly, the
inequality only holds for $k=2$ and $d=3$.

\

Assume now that there exists $d\in \mathbb{N}$ with $3\leq
k+1\leq d$ such that $\lfloor\frac{(d-1)(k+1)}{2}\rfloor < n
<\lfloor\frac{d(k+1)}{2}\rfloor$. By Theorem \ref{gaps}, {\rm
Det}$(K_{n+1:k})=d$ and it suffices to show that $d<n+1-k$ except
for $k=2$ and $d=4$ what leads to the Kneser graph $K_{6:2}$. The
following expression holds for all positive integers $d,k$ and
$n$ satisfying the above conditions except for $k=2$ and $d=4$:
$$d-1<\lfloor\frac{(d-1)(k+1)}{2}\rfloor+1-k<n+1-k$$ Hence, the
result follows.

($\Longleftarrow$) The determining numbers of $K_{n+1:1}$,
$K_{5:2}$ and $K_{6:2}$ are $n$, 3 and 4 respectively.

\end{proof}

\section{Concluding Remarks}

We have introduced hypergraphs for finding determining sets of
Kneser graphs. This technique provides the determining number of
all Kneser graphs $K_{n:k}$ with $n\geq \frac{k(k+1)}{2}+1$. We
also show the usefulness of this approach by providing shorter
proofs (of those in \cite{boutin}) of the characterization of all
Kneser graphs with fixed determining number 2, 3 or 4, and
establishing those with fixed determining number 5. Finally, we
prove that it does not exists an infinite number of Kneser graphs
$K_{n,k}$ with $k\geq 2$ and determining number $n-k$, answering a
question posed by Boutin in \cite{boutin}.

It appears that our technique can also be applied to the values of
$n$ and $k$ in between the line $n=2k+1$ and the curve
$n=\frac{k(k+1)}{2}+1$ for which Det$(K_{n:k})$ remains to
compute (see Figure \ref{comparing}). Nevertheless, the values on
the line $n=2k+1$ with $n\neq 2^r-1$ would probably require
different arguments. We also believe that hypergraphs can be used
in the study of the determining number of other families of
graphs such as the Johnson graphs.

An interesting open problem is to find similar approaches to
compute other parameters related to graphs such as the metric
dimension. Perhaps hypergraphs can characterize not only
determining sets but also resolving sets.

\bibliographystyle{plain}

\end{document}